\documentclass[11pt]{article}
\usepackage{amssymb, verbatim, amsmath}
\usepackage[all]{xy}
\usepackage{amsthm}
\usepackage{amssymb, amsmath}
\topmargin -0.5in
\textheight 23.5cm
\oddsidemargin 0cm
\textwidth 16cm
\parindent 0mm
\parskip \baselineskip
\usepackage{graphicx,color}

\newcommand{\bc}{\begin{center}}
\newcommand{\ec}{\end{center}}
\newcommand{\be}{\begin{enumerate}}
\newcommand{\ee}{\end{enumerate}}
\newcommand{\beq}{\begin{equation}}
\newcommand{\eeq}{\end{equation}}
\newcommand{\bi}{\begin{itemize}}
\newcommand{\ei}{\end{itemize}}
\newcommand{\bd}{\begin{description}}
\newcommand{\ed}{\end{description}}
\newcommand{\ba}{\begin{array}}
\newcommand{\bea}{\begin{eqnarray*}}
\newcommand{\eea}{\end{eqnarray*}}
\newcommand{\ea}{\end{array}}
\newcommand{\bt}{\begin{tabular}}
\newcommand{\et}{\end{tabular}}

\newcommand{\bmi}{\begin{minipage}}
\newcommand{\emi}{\end{minipage}}

\newcommand{\lb}{\linebreak}

\newcommand{\Z}{\Bbb Z}

\newcommand{\R}{\Bbb R}

\newtheorem{theorem}{Theorem}[section]

\newtheorem{corollary}[theorem]{Corollary}

\newtheorem{definition}[theorem]{Definition}
\newtheorem{example}[theorem]{Example}

\newtheorem{lemma}[theorem]{Lemma}

\newtheorem{proposition}[theorem]{Proposition}
\newtheorem{remark}[theorem]{Remark}


\begin{document}

\bc {\bf\large Toward an intuitive understanding of the structure of near-vector spaces}\\[3mm]
{\sc K-T Howell \& S Marques$^{*}$}

\it\small
Department of Mathematical Sciences,
Stellenbosch University,
Stellenbosch, 7600,\lb
South Africa\\
\rm e-mail: kthowell@sun.ac.za \& smarques@sun.ac.za
\ec

\quotation{\small {\bf Abstract:} In this paper we analyse the definition Andr\'e proposed for near-vector spaces to make it more transparent. We also study the class of near-vector spaces over division rings and give a characterisation of regularity that gives a new insight into the decomposition of near-vector spaces into regular subspaces. We explicitly describe span and deduce a new characterisation of subspaces.}

\small
{\it Keywords:} Near-vector spaces; Nearrings; Nearfields; Division rings; Span; Subspaces; Linear Algebra\\ 
{\it 2010 Mathematics Subject Classification:} 16Y30; 12K05\\ 
\normalsize

\section{Introduction}
The notion of a structure with less linearity than a traditional vector space has been studied by a number of authors. First Beidleman \cite{Beidleman} used near-ring modules to construct a near-vector space; whereas Andr\'e \cite{Andre} used an additive group together with a set of endomorphisms of the group, satisfying certain conditions. Next Karzel \cite{Karzel} defined a near-vector space structure mimicking a vector space structure but without the scalars acting as endomorphisms on the underlying group. 

In the first part of the paper we analyse Andr\'e's definition and in doing so we attempt to highlight how Andr\'e's definition seems to be the most suitable and natural definition to work with since it allows a lot of flexibility in the structure. We state a structural Lemma \ref{nearfielddecomp} for general vector spaces that hopefully reveals the connection between vector spaces and near-vector spaces and how the generalisation affects the structure. We also study one of the main new features of near-vector spaces, namely the non-unique additive structures on the underlying multiplicative group. In particular, we state the Key Lemma \ref{basisaddition} for this paper that give a necessary condition for two additive structure to be the same. Several examples are exhibited to highlight the special features of near-vector spaces.

In the second part of the paper we focus on constructions of near-vector spaces over division rings. We first focus on regularity. The main result (Theorem \ref{vstheorem}) of this section fully characterises the uniqueness of the induced additive structure into regularity, division ring and quasi-kernel structure. This permitted us to give an alternate proof of the decomposition into regular subspaces when the underlying set can be endowed with a division ring structure. We then focus on the structure of Span. Our main result in that section, Theorem \ref{spanstruc}, describes precisely the span of any one element set in $V$. Surprisingly, it reveals that $Span$ and linear combinations agree in that setting as they do in classical linear algebra. Generalising this result of Span to a general set, we also deduce a very useful characterisation of subspaces which actually corresponds to the usual characterisation in classical linear algebra while the initial definition required a difficult check for the generating condition.


\section{Preliminary material}
In this section we define concepts that are analogous to those that are central to traditional linear algebra.

In \cite{Andre} the notion of a near-vector space was defined as: 

\begin{definition}\label{defnvs}(\cite{Andre}, Definition 4.1, p.9)
A non-trivial near-vector space is a pair $(V,A)$ which satisfies the following conditions:
\begin{enumerate}
\item $(V,+)$ is a group and $A$ is a set of endomorphisms of $V$;
\item A contains the endomorphisms $0$, $id$ and $-id$;
\item $A^{*}=A\backslash\{0\}$ is a subgroup of the group Aut$(V)$;
\item If $\alpha x=\beta x$ with $x\in V$ and $\alpha,\beta\in A$, then $\alpha=\beta$ or $x=0$, i.e. $A$ acts fixed point free on $V$;
\item The quasi-kernel $Q(V)$ of $V$, generates $V$ as a group. Here, $$Q(V)=\{x\in V|\forall \alpha, \beta\in A,\,\exists \gamma\in A \text{ such that }\alpha x+\beta x=\gamma x\}.$$
\end{enumerate}
\end{definition}
Note that the trivial near-vector space has to be considered separately since it results in $A^{*}$ being empty. We will write $Q(V)^*$ for $Q(V) \backslash \{0\}$ throughout this paper and just $Q$ if it does not cause confusion. Also, we note that we write scalars on the left and and as a result, make use of left near-fields. We will use $F$ to denote a near-field and $F_d$ its distributive elements. For more on near-fields we refer the reader to \cite{Mel} and \cite{Pilz}.

From Andr\'e's definition, it is natural to define the concept of a homomorphism as follows:

\begin{definition}(\cite{HowMey}, Definition 3.2, p.57)\label{homo}
We say that two near-vector spaces $(V_1,A_1)$ and $(V_2,A_2)$ are homomorphic (written $(V_1,A_1)\cong (V_2,A_2)$) if there are group homomorphisms
 $\theta : (V_1,+) \rightarrow (V_2,+)$ and
 $\eta : (A_1^*,\cdot) \rightarrow (A_2^*,\cdot)$
such that $\theta(\alpha x) = \eta(\alpha)\theta(x)$ for all $x\in V_1$ and $\alpha\in A_1^*$. We will write homomorphisms as pairs $(\theta,\eta).$
\end{definition}
This permits us to compare near-vector spaces.

We will also need the notion of a subspace and span as they will have an important role to play as in traditional linear algebra.

\begin{definition}\label{def2}(\cite{Howell2}, Definition 2.3, p.3)
 If $(V,A)$ is a near-vector space and $\emptyset \neq V'\subseteq V$ is such that $V'$ is the subgroup of $(V, +)$ generated additively by $AX = \{ax\,|\, x \in X, a \in A\}$, where $X$ is an independent subset of $Q(V)$, then we say that $(V', A)$ is a subspace of $(V, A)$, or simply $V'$ is a subspace of $V$ if $A$ is clear from the context.  
\end{definition}

\begin{definition}(\cite{HowSanSpan}, Definition 3.2, p.3235)
Let $(V,A)$ be a near-vector space, then the span of a set $S$ of vectors is defined to be the intersection $W$ of all subspaces of $V$ that contain $S,$ denoted $\mbox{span }S$. 
\end{definition}

It is straightforward to verify that $W$ is a subspace, called the subspace spanned by $S,$ or conversely, $S$ is called a spanning set of $W$ and we say that $S$ spans $W$. Moreover, if we define $\mbox{span }\emptyset = \{0\}$, then it is not difficult to check that $\mbox{span }S$ is the set of all possible linear combinations of $S$ if $S\subseteq Q(V).$ However; if $S$ contains elements outside of the quasi-kernel then it is not clear that these two coincide (See Theorem  \ref{spanstruc} ).  For this reason we define:

\begin{definition}
Let $V$ be a near-vector space. 
For every $v\in V$,  we define the linear combinations of $v$ as the set 
$$L(v) =  \{ \alpha_1 (v) + \cdots + \alpha_t (v) | t \in \mathbb{N} \ and \ \alpha_i \in A, for \ i \in \{ 1, \ldots , t\} \}.$$  
\end{definition}

In a near-vector space linear independence is defined in terms of the elements of the quasi- kernel $Q(V).$

\begin{definition}(\cite{Andre}, p.302) 
Let $(V,A)$ be a near-vector space. We say that a set $S \subseteq Q(V)$ is linearly independent if for any $v_1, \cdots , v_n$ in $S$ and $\alpha_1 , \cdots, \alpha_n\in A$ such that if
$$ \alpha_1 v_1 + \cdots + \alpha_n v_n =0$$ 
then 
$$\alpha_1 = \cdots = \alpha_n =0.$$ 
Otherwise we say they are linearly dependent. 
\end{definition}

\begin{definition}(\cite{Andre}, p.303)
Let $(V,A)$ be a near-vector space, then an independent generating set for $Q(V)$ is called a basis of $V$ and its cardinality is called the dimension of $V$.
\end{definition}

As for a vector space, one can prove that a near-vector space has a basis by showing that from the set of elements of $Q(V)$ generating $V$, one can always extract a basis. Thus any near-vector space admits a basis in $Q(V).$ It is routine as in linear algebra to prove that there is a well defined notion of dimension, i.e. if a near-vector space has a finite basis all the bases have the same number of elements. See \cite{Andre} for more details on this and a proof that any near-vector space admits a basis by enlarging an existing linear independent set. 

The dimension of an element is defined as follows:

\begin{definition}(\cite{HowSanSpan}, Definition 3.5, p.3236)
For $v\in V\setminus\{0\}$ we define the dimension of $v$ to be 
$$n=\min\left\lbrace m\in \mathbb{N}\mid v=\sum_{i=1}^m \alpha_iu_i,\text{ with }u_i\in Q(V)\setminus\{0\},\alpha_i\in A\setminus\{0\},i=1,\ldots,m  \right\rbrace ,$$
we denote it by $dim(v)=n$ and $dim(v)=0$ if $v$ is the zero vector.
\end{definition}

The concept of regularity is a central notion in the study of near-vector spaces. Andr\'e called the regular spaces the building blocks of near-vector space theory. They happen to be well-behaved, as we will see. This led to the Decomposition Theorem, where any near-vector space is decomposed into regular parts. See Theorem 4.13, p. 3.6 in \cite{Andre}.

\begin{definition}\label{def4} (\cite{Andre} Definition 4.7, p.11) 
A near-vector space is {\it regular} if any two vectors of  $Q(V)\backslash\{0\}$ are {\it compatible, }i.e. if for any two vectors $u$ and $v$ of $Q(V)$ there exists a $\lambda \in A\backslash\{0\}$ such that $u + v\lambda \in Q(V)$. 
\end{definition}

Note that every near vector space $(V, A)$ with $dim ( V) \leq 1,$ is regular and has $Q(V)=V$.

The addition in $V$ naturally gives rise to an addition in $A$ as follows: 

\begin{definition}(\cite{Andre}, p.299)
Let $(V,A)$ be a near-vector space and let $v \in Q(V) \backslash \{0\}$. Define the operation $+_{v}$ on A by  
$$
(\alpha +_{v} \beta)v := \alpha v  + \beta  v   \; (\alpha, \beta \in A).
$$
\end{definition}

With this addition, $(A,+_v,\cdot)$ is a near-field (see \cite{Andre}). The essentiality of this definition will become clear to the reader in Section 3 and 4.

This addition gives rise to:

\begin{definition}(\cite{Andre}, Definition 2.6, p.301)
Let $(V,A)$ be a near-vector space and let $u \in Q(V) \backslash \{0\}$. Define the \it kernel \rm $R_{u}(V) = R_{u}$ of   
$(V,A)$ by the set
\[R_{u} := \{v \in V \,|\, (\alpha +_{u} \beta)v = \alpha v  + \beta v \mbox{ for every } \alpha, \beta \in A\}.\]
\end{definition}

In Theorem \ref{vstheorem} in Section 4, we will see how the notion of a kernel relates to regularity.

\section{Analysis of the definition of Near-vector spaces}

Definition \ref{defnvs} might be a bit disconcerting to some. As a result, we decided to revisit the definition and try to understand how essential each assumption is to a good notion of what a near-vector space should be as the natural way to  widely generalise a vector space after the introduction of near-fields. As part of this we will later see how the concepts of subspace, span and linear combinations interplay.  Traditionally in linear algebra, a vector space is a field together with an abelian group endowed with an endomorphism action by the field on the abelian group, called the scalar multiplication. A near-vector space structure does not just result in a weakening of one of the distributive laws, but allows more generality.  In the definition we do not begin by fixing a near-field, but instead fix a multiplicative group. From Andr\'e's definition (Definition \ref{defnvs}), one can construct a near-field such that the near-vector space can be viewed in the expected way mentioned above. Nevertheless, the choice of the near-field is not unique, as we will see later on. As result of a lack of uniqueness of the underlying near-field, the direct generalisation of the traditional construction is weaker than what is proposed by Andr\'e. 

\medskip

The essence of the geometry behind linear algebra stems from the existence of a coordinate system. The geometry resides in the notion of a basis which gives rise to unit vectors and coordinate axes which are one-dimensional subspaces. In order to reproduce this idea in a more general setting, we start with an additive group $V$ and a set $A$ such that there exist elements $v \in V$ such that $Av$ represents a set of coordinate axes and $A$ becomes a near-field induced by $(V,+)$ that is in  bijection with $Av$. 

Cleverly, Andr\'e noted that one does not need this property to hold for every $v\in V$ to guarantee the existence of a coordinate system, one only needs to ensure that such $v$'s generate $V$. As we will see this is guaranteed with the property that
$$Q(V) =  \{ v\in V | L(v) = Av\}=\{v \in V |Av = span(v)\}$$ 
generates $V$ (see Definition \label{defAndre}, 5.). 
Note that, for $v \in Q(V)$, we have 
$$ L(v) = span(v)=Av.$$
Thus picking $v \in Q(V)$ guarantees that $Av = \mbox{span }v = L(v).$ This shows that the coordinate axes $Av$ are now subspaces of $V,$ which we would expect. The fact that $Q(V)$ generates $V$ allows us to see that $$V =span (\{ v \in V | L( v )= Av\}).$$ The notion of a basis will give us access to a coordinate system from some $Av$ with $v \in Q(V).$

\medskip
We are still expecting an underlying near-field structure. We will now explain how this structure can be revealed without being fixed in advance. 
To define a well-defined operation on $A$ induced from that of $V$, we need for all $\alpha, \beta \in A$ that $\alpha v + \beta v = \gamma v \in Av,$ for a unique $\gamma \in A.$ The existence of $\gamma$ is equivalent to $v$ belonging to $Q(V)$ and its uniqueness is guaranteed by requiring fixed point freeness (see Definition \label{defnvs}, 4.). As a consequence of the existence and uniqueness of $\gamma,$ if we fix a nonzero $v$ in $Q(V),$ then for any $\alpha, \beta \in A,$ we can define an operation $+_{v}$ on $A$ that sets $\alpha +_{v} \beta $ to be this $\gamma.$  

\medskip

What is left to show now is that the addition of $V$ naturally induces a structure of a near-field on $A.$
 This mimics what we have for traditional vector spaces, where the underlying structure would be a field. The difference is that the underlying near-field structure is not fixed beforehand and non unique. 
 
We will use $0_A$ and $0_V$ to denote the identities of $A$ and $V,$ respectively. In order for the group structure of $V$ to induce a group structure on $A,$ we need that for all $v \in Q(V), 0_{A} \cdot v = 0_{V}.$ Therefore $0_A$ acts as the zero endomorphism on $V$ which is generated by elements of $Q(V).$ In order to have a meaningful endomorphism action of $A$ on $V$ that induces a near-field structure on $A$, we will also need a multiplicative structure on $A^{*} = A \backslash \{0\}.$

We will use $1_A$ to denote the multiplicative identity of $A$. In order to ensure $A$ acts as an endomorphism on $V$ we will need $1_A$ to act as the identity endomorphism $Id$. 

If $v \in Q(V),$ then
$$1_{A}v - 1_{A}v = 0_{V},$$ by the group structure of $(V,+),$ thus $((1_A)+_{v}(-1_{A}))v = 0_{V}.$ Now by the fixed point free property $(1_A)+_{v}(-1_{A}) = 0_A.$ Thus $-1_{A}$ is the inverse of $1_A,$ so that for all $v \in V,$ $(-1_{A})(v) = -v$ and $-1_A$ act as $-Id$ in $V$. 

Moreover, since $A$ induces an endomorphism action on $V$, if we use $-a$ to denote the additive inverse of $a\in A$ we then have 
$$ (-1_A \cdot a ) v = (-1_A) (av) = - (av).$$ 
As before we can prove that $-a=-1_A(a)$ is the additive inverse of $A$. So that $a\in A$, implies $-a\in A$. 

{\it Note that as a consequence of $-Id$ acting as an endomorphism of $(V, +)$ we have that $- (v + w)= -w-v = -v -w$, for all $v, w \in V$ therefore $(V, +)$ must be an abelian group. }

\medskip

Let us summarise what has been identified. As in traditional linear algebra we start with a group $(V, +)$ and a set of endomorphisms of $V$, $(A, \cdot).$  

To have access to a coordinate system (basis) formed from an underlying near-field structure so that we are able to study  the geometric properties of the system, we need 
\begin{enumerate} 
\item A set of coordinate axes $Av$ that generate $(V, +)$ as an additive group.  This is guaranteed by the property "The quasi-kernel $Q(V)$ of $V$, generates $V$ as a group. " in Definition \ref{defnvs}. 
\item That $(V, +)$ induces a near-field structure   on $(A, \cdot)$, more precisely there exists a group operation $+'$ on $A$ induced by the operation $+$ on $V$ such that $(A, +' , \cdot )$ is a near-field. The properties: "A contains the endomorphisms $0$, $id$ and $-id$", "$A^{*}=A\backslash\{0\}$ is a subgroup of the group Aut$(V)$;" and the fixed point free property precisely ensures that $(A, +_v, \cdot)$ is a near- field for any  $v \in Q(V)\backslash\{0\}$. 
\end{enumerate}

\begin{remark}
We note that Andr\'e's definition (Definition \ref{defnvs} 2.) requires the existence of elements in $A$ that acts as $Id_A$ and $-Id_A$ on $V$, that will imply that $A$ will contain an element of multiplicative order $2$. 
By Cauchy and Lagrange's Theorem, $(A^{*},\cdot)$ is a group with even order if and only if there is a $x\neq 1_{A} \in A^{*}$ such that $x^2 =1_{A}$. For any $v \in V,$ we have that $x ( v + xv) =  v+ xv$ since $V$ is abelian and by the fixed point free property, since $x\neq 1_{A},$ we have that $v+xv=0$ and $xv=-v$. When the characteristic of $V$ is not $2,$ this element $x$ acts as $-Id$ in $A.$ However, when the characteristic of $V$ is $2$, $x$ and $1_{A}$ have the same action, contradicting the fixed point freeness. To conclude, when the characteristic of $V$ is not $2$, $A^{*}$ will have exactly one element of order $2$ while in characteristic $2,$ $A^{*}$ cannot have any element of order $2.$ Note that if $A^{*}$ is finite will imply that if the characteristic of $V$ is $2,$ $A^{*}$ will have even order, while if the characteristic is not $2,$ the order of $A^{*}$ has to be odd.
\end{remark}

From linear algebra the most basic example of a vector space is a field over itself, hence it would be essential to have that the additive group of a near-field be a near-vector space over its multiplicative group. 

\begin{example}(\cite{Andre})
Let $F$ be a near-field that is not $M_{C}({\Z}_{2})$ (see \cite{Pilz}, Proposition 8.1, p.249), then we have that:

\begin{enumerate}
\item $(F,+)$ is a group and $(F,\cdot) $ is a set of endomorphisms of $F$;
\item $0_F$ acts as an endomorphism by assumption since $F$ is zero-symmetric. Clearly $1_F$ is an endomorphism while $-1_F \in F$ and is an endormorphism by Proposition 8.10, p. 151 in \cite{Pilz};
\item It is clear that $F^{*}=F\backslash\{0\}$ is a subgroup of the group Aut$(F)$;
\item $F$ acts fixed point free on itself since $F$ has multiplicative inverses;
\item The quasi-kernel is the set $Q(F) = \{(k_i)\lambda \,|\, \lambda \in , k_i \in F_{d}\} = F$ (Theorem 4.4 p.304 in \cite{Andre}). 
For any nonzero $x \in F,$ $\{x\}$ is a basis of $(F,F)$ of dimension 1.
\end{enumerate}
Thus $(F,F)$ is a near-vector space.
\end{example}

In order to state the next lemma, we need some definitions.

\begin{definition} For $I$ an index set, possibility infinite, we define
$$A^{(I)}= \{ a : I \rightarrow A| a \text{ is zero for all but finitely many } i \in I \},$$
and the standard basis elements $e_i \in A^{(I)}$ for any $i \in I$ defined on $I$ as 
$$e_i(j) = \delta_{i,j} =\left\{ \begin{array}{cc} 0 & \text{ if $i\neq j$} \\ 1 & \text{ otherwise.} \end{array}\right. $$
\end{definition}
The following lemma gives a way to visually compare near-field theory to classical linear algebra, emphasising the structure that completes the analysis above. 

\begin{lemma} \label{nearfielddecomp}
The following assertions are equivalent: 
\begin{enumerate}
\item  $(V, A)$ is a near-vector space. 
\item $V \simeq^{\phi} A^{(I)},$ $e_i\in Q(A^{(I)})$ and the additive structure of the near-vector space $(A^{(I)}, A)$ is given by $+'$ defined for any $a, b \in A^{(I)} $ point-wise by
$$ (a+'b)(i)= a(i) +_{\phi^{-1}(e_i)} b(i)$$
and the scalar multiplication induced by the action of $A$ on $V.$ 
\end{enumerate}
\end{lemma}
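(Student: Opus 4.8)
The plan is to read the lemma as a coordinatisation statement: the real content is the implication $(1)\Rightarrow(2)$, which builds an explicit isomorphism from a basis, while $(2)\Rightarrow(1)$ amounts to transporting the near-vector space structure along an isomorphism. I would first fix, using the fact recorded above that every near-vector space admits a basis contained in $Q(V)$, a basis $\{v_i\}_{i\in I}$ of $V$ with each $v_i\in Q(V)$. Since the $v_i$ generate $V$ and are linearly independent, every $v\in V$ has a unique expression $v=\sum_{i\in I}\alpha_i v_i$ as a finite sum with $\alpha_i\in A$, and I would define $\phi\colon V\to A^{(I)}$ by $\phi(v)(i)=\alpha_i$. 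Well-definedness and injectivity are exactly uniqueness of coordinates (linear independence), and surjectivity is immediate since any element of $A^{(I)}$ has finite support and determines the finite combination $\sum_i a(i)v_i$; note $\phi(v_i)=e_i$, so $\phi^{-1}(e_i)=v_i$ and hence $+_{\phi^{-1}(e_i)}=+_{v_i}$, matching the formula in the statement.

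The key step is to verify that $\phi$ is a group homomorphism onto $(A^{(I)},+')$. Writing $v=\sum_{i\in J}\alpha_i v_i$ and $w=\sum_{i\in J}\beta_i v_i$ over a common finite $J$, I would use that $(V,+)$ is abelian -- established in the discussion above as a consequence of $-Id$ acting as an endomorphism -- to regroup
$$v+w=\sum_{i\in J}\alpha_i v_i+\sum_{i\in J}\beta_i v_i=\sum_{i\in J}(\alpha_i v_i+\beta_i v_i).$$
By the very definition of $+_{v_i}$ each summand satisfies $\alpha_i v_i+\beta_i v_i=(\alpha_i+_{v_i}\beta_i)v_i$, so by uniqueness of coordinates $\phi(v+w)(i)=\alpha_i+_{v_i}\beta_i=(\phi(v)+'\phi(w))(i)$. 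For the scalar action I would take $\eta=\mathrm{id}_{A^*}$ and check $\phi(\alpha v)=\alpha\,\phi(v)$: since each $\alpha\in A$ is an endomorphism of $V$ and the action is associative, $\alpha v=\sum_i(\alpha\alpha_i)v_i$, so $\phi(\alpha v)(i)=\alpha\alpha_i=\alpha\,\phi(v)(i)$, which is the pointwise action induced by $A$. Thus $(\phi,\mathrm{id})$ is an isomorphism of near-vector spaces, and since $v_i\in Q(V)$ and isomorphisms preserve the quasi-kernel, $e_i=\phi(v_i)\in Q(A^{(I)})$.

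For $(2)\Rightarrow(1)$ I would verify directly that $(A^{(I)},A)$ with $+'$ and the pointwise action satisfies the five conditions of Definition \ref{defnvs}, after which $V\simeq A^{(I)}$ forces $V$ to be a near-vector space. That $(A^{(I)},+')$ is an abelian group follows coordinatewise from each $(A,+_{v_i})$ being the additive group of a near-field; that every $\alpha\in A$ acts as an endomorphism reduces, coordinatewise, to the identity $\alpha(\beta+_{v_i}\gamma)=\alpha\beta+_{v_i}\alpha\gamma$, which I would obtain by applying the endomorphism $\alpha$ to $(\beta+_{v_i}\gamma)v_i=\beta v_i+\gamma v_i$ and cancelling $v_i$ by fixed point freeness; fixed point freeness on $A^{(I)}$ follows from cancellation in the multiplicative group $A^*$ at any nonzero coordinate; and $e_i\in Q(A^{(I)})$ together with the identity $a=\sum_i a(i)e_i$ (a finite $+'$-combination) shows the quasi-kernel generates.

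I expect the main obstacle to be the homomorphism property of $\phi$ for addition: the operations $+_{v_i}$ genuinely depend on $i$ and need not coincide, so it is not a priori clear that the global addition of $V$ splits coordinatewise. The point that makes it work is precisely the commutativity of $(V,+)$, which licenses the regrouping displayed above so that each coordinate collapses through its own $+_{v_i}$; everything else is either uniqueness of coordinates or a one-line application of fixed point freeness.
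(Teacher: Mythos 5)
Your proof is correct and takes essentially the same route as the paper: fix a basis $\{v_i\}_{i\in I}\subseteq Q(V)$, coordinatise $V$ as $A^{(I)}$ via $\sum_{i\in I}\alpha_i v_i\mapsto (\alpha_i)_{i\in I}$ (so $\phi(v_i)=e_i$ and $+_{\phi^{-1}(e_i)}=+_{v_i}$), check compatibility with $+'$ and the componentwise scalar action, and prove the converse by verifying the axioms of Definition \ref{defnvs} directly on $A^{(I)}$. The only difference is one of detail, not of method: where the paper declares the key verifications not difficult, you make them explicit --- the regrouping argument that uses commutativity of $(V,+)$, uniqueness of coordinates from linear independence, and fixed point freeness to cancel $v_i$ --- which is exactly what the paper's sketch presupposes.
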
 
\begin{proof} 
$$1. \Rightarrow 2.$$ 

We know that $V$ admits a basis $\{v_1, \cdots , v_n\}$. It is not hard to prove then that
$$V\simeq \oplus_{i \in I} Av_i.$$

Then, we obtain the isomorphism $\phi$ by composing with the isomorphism
$$\psi:   \oplus_{i \in I} Av_i \rightarrow A^{(I)}$$ 
sending $v_i$ to $e_i$ and extended by linearity by 
sending any element in $ \oplus_{i \in I} Av_i $,  $\sum_{i \in I}  \alpha_{i} v_i$ to $\sum_{i \in I} \alpha_i e_i $. 

The scalar multiplication on $A^{(I)}$ is induced by the action of $A$ on $V$ component-wise via the bijections $Av_i \simeq A$ for any $i\in I$ which sends $\alpha v_i$ to $\alpha$ for $i \in I.$ We note that this is well-defined by the fixed point free property. 

It is not difficult to prove that this will define a near- vector space isomorphism for the near-vector space structure identified in the statement. The converse is also clear.
\end{proof} 
\begin{remark} We suppose that the  scalar multiplication on $A^{(I)}$ is induced by the given multiplicative structure of $A$ as follows:
$\cdot '$ is defined point-wise for any $\alpha \in A $ and $ a \in A^{(I)}$ by: 
$$ (\alpha \cdot' a ) (i ) = \eta_{i}(\alpha ) a(i),$$ 
where $\eta_i$ is a surjective map to ensure that any element of $A$  acts on every component so that the entire action of $A$ is transmitted to $A^{(I)}.$
Then $\eta_i$ is a near-field isomorphism from $(A,+_{b},\cdot)$ to $(A,+,\cdot),$ where $b \in Q (A^{(I)})\backslash\{0\}.$

Indeed, the injectivity is guaranteed by the fixed point freeness. The multiplicativity follows from the action of $A.$

Moreover, let $b\in Q(A^{(I)})$ nonzero, for all $\alpha , \beta \in A$, we have point-wise
$$(\alpha \cdot' b + \beta \cdot' b)(i) = \eta_i(\alpha) b (i) + \eta_i (\beta) b(i) =(\eta_i(\alpha)+ \eta_i (\beta)) b(i).$$ 

In the other hand 

$$ \alpha \cdot' b + \beta \cdot' b = (\alpha +_b  \beta) \cdot' b = \eta_i (\alpha +_b \beta) b(i).$$

So that by the point free property
$$\eta_i( \alpha) + \eta_i(\beta) = \eta_i (\alpha +_b \beta),$$ 
as stated.

When $+_a = +_b=+$ for any $a, b \in Q (A^{(I)})\backslash\{0\}$, then we have a near-field automorphism of $(A, +, \cdot).$
\end{remark} 

To emphasise the importance of $-1 \in A$ and the quasi-kernel generating $V$ beyond the induced structure of near-field in $A$, we give the following two examples:

\begin{example}\label{jacques}
Take 
$$V = \R$$
and 
$$A = \R^{+}\cup \{0\}.$$
Then 
$$Q(V) = \R.$$ 
Note that all the axioms of Definition \ref{defnvs} are satisfied, except that $A$ contains no elements acting as $-Id_A$ in $V$. But despite the fact we cannot naturally obtain the structure of a near-field in $\mathbb{R}^+$ from the field structure of $\mathbb{R}$ as explained above.The set $\{-1,1\}$ is a generating set of $\R,$ but it is not linearly independent since for $x,y \in \mathbb{R},$ both nonzero, $$x + y(-1) = 0$$ implies that $x = y.$ Thus we also do not have a basis. 
\end{example} 

$A$-groups have been studied. They meet all the requirements of Andr\'e's definition, but assumption 5. of Definition \ref{defnvs} (see \cite{Andre} for more on $A$-groups.)  As an illustration we give the following example where only this assumption is not satisfied. It shows that the notion of an $A$-group would not lead to a good notion of what intuitively we could expect a near-vector space to be. 

\begin{example}\label{lotte}
Take 
$$V = \frac{\mathbb{Z}}{3\mathbb{Z}} \oplus \mathbb{Z}$$
and 
$$A = \{-1,0,1\}.$$
Then 
$$Q(V) = \frac{\mathbb{Z}}{ 3 \mathbb{Z}} \oplus \{ 0\}.$$
The only missing assumption in the definition of near vector space is that $Q(V)$ does not generate $V$. 
\end{example}  


The following example illustrates the flexibility that the definition of a homomorphism allows. In the next section we will identify the important properties of near-vector spaces which will explain the unnatural phenomena in the example below.

\begin{example}
We use $+_{3}$ to denote the addition on $\mathbb{R}$ defined by for all $x, y\in \mathbb{R}$ by
$$ x +_{3} y = \sqrt[3]{x^3 + y^3}.$$ 
Then we can show that $\mathbb{R}_{+_3}=(\mathbb{R}, +_{3}, \cdot )$ is a field. 
It is clear that $\mathbb{R}$ is closed under $+_{3},$ $0$ is the identity element, $+_{3}$ is commutative and $-x$ is an inverse for $x$. 
As for distributivity, we have 
$$\alpha ( x +_{3} y ) = \alpha  \sqrt[3]{x^3 + y^3} =  \sqrt[3]{(\alpha x)^3 +(\alpha  y)^3}= ( \alpha x ) +_{3} (\alpha y).$$ 
Note that any odd power could be used to define the addition, giving infinite field structures on $\R$.

Next we prove that the mapping
$$ \begin{array}{cccc} \phi: & (\mathbb{R}, +_{3}, \cdot ) & \rightarrow & (\mathbb{R}, +, \cdot ) \\ 
& x & \mapsto & x^3 
\end{array}$$ 
is a field isomorphism.
It is well-known that $\phi$ is a bijection and for all $x , y \in \mathbb{R},$ 
$$  \phi ( x  +_{3} y) = x^3 + y^3 = \phi (x) + \phi (y).$$ 


We can construct a near-vector space isomorphism from this. Since $\phi$ induces both a multiplicative and additive isomorphism if we define $\cdot_3$ on for all $x, \alpha \in \mathbb{R}$ by $$\alpha \cdot_3 x = \alpha^{3}x = \phi(\alpha) x,$$ 
This homomorphism induces a commutative diagram: 
$$ \xymatrix{ ( \mathbb{R} , \cdot ) \times ( \mathbb{R} , +_3)  \ar[d]_m  \ar[r]^{\phi \times \phi } & \ar[d]^m ( \mathbb{R} , \cdot )  \times ( \mathbb{R} , +)  \\ 
( \mathbb{R} , +_3 ) \ \ar[r]^{\phi } & ( \mathbb{R} , + ) }$$ 
where $m $ sends $(x, y)$ to $xy,$ the usual multiplication of $x$ and $y$ in $\mathbb{R}$. 

Note that $\mathbb{R}_{+_3}$ is a vector space over $\mathbb{R}_{+_3},$ but only a near-vector space over $\R.$ Clearly, $\mathbb{R}$ is a vector space over $\R.$ Even though they are not isomorphic in the traditional vector space sense, $(\phi , \phi) $ is a near-vector space isomorphism that gives more flexibility than traditional linear algebra would have allowed by only fixing the multiplicative structure of the underlying set.
\end{example}

\begin{example}
We consider $\mathbb{Q}(\sqrt{2})$ and $\mathbb{Q}(\sqrt{3})$. These are non- isomorphic fields with isomorphic multiplicative groups with
$$\mathbb{Q}(\sqrt{2})^* =\mathbb{Q}(\sqrt{3})^* = \mathbb{Z}\times \{ \pm 1 \}$$
since the ring of integers of $\mathbb{Q}(\sqrt{2})$ (respectively $\mathbb{Q}(\sqrt{3})$) is  $\mathbb{Z}(\sqrt{2})$ (resp. $\mathbb{Z}(\sqrt{3})$) are unique factorization domains.

Fixing $A = \mathbb{Z}\times \{ \pm 1 \} \cup \{ 0\}$ we have that $(\mathbb{Q}(\sqrt{2}), A) $ and $\mathbb{Q}(\sqrt{3}), A)$ are near-vector spaces
with $(A, +_{\sqrt{2}}, \cdot)$ and $(A, +_{\sqrt{3}}, \cdot)$ non-isomorphic.

This also permits us to construct the near-vector space $(\mathbb{Q}(\sqrt{3})\oplus \mathbb{Q}(\sqrt{2}), A)$
which has no equivalent in traditional linear algebra. 
\end{example}

The different stuctures of the near-field $(A, +_v , \cdot)$ for a near vector space $(V, A)$ are known to be isomorphic when $w \in Av (= Span (V))$, for any $v \in Q(V)\backslash \{ 0 \} $ (Theorem 2.5, p.300 in \cite{Andre}). 

The main results of the section rely upon the following result that holds for any near-vector space and reveals an interesting necessary condition for $+_v = +_w$ for $v, w \in Q(V) \backslash \{ 0 \}$. 

\begin{proposition}[Key Lemma] \label{basisaddition}
Let $(V,A)$ be a near-vector space with $dim(V) >1$ and $Q(V)  = V$ and $S= \{v_i \in Q(V)| i \in I \}$ a linearly independent set (possibly infinite) of $V.$ 

If $v = \sum_{i\in I} \theta_i v_{i}$ and $v' = \sum_{i\in I} \theta_i' v_{i}$ are both in $Q(V)$ where all but finitely many of the $\theta_i$ and $\theta_i'$ are zero and we have that there exist $i_0, j_0 \in I $ with $i_0 \neq j_0$, such that $+_{\theta_{i_0}v_{i_0}}  =+_{\theta'_{j_0}v_{j_0}} $. Then $+_v = +_{v'}= +_{\theta_{i}v_{i}}  =+_{\theta'_{j}v_{j}}$, for all $i,j \in I$. 
\end{proposition}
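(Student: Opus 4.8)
The plan is to isolate a single \emph{master identity} and then feed the hypothesis $Q(V)=V$ into it. The identity is: for every $w=\sum_{k\in I}\eta_k v_k\in Q(V)$ with finite support, one has $+_w=+_{v_k}$ for each $k$ with $\eta_k\neq0$. First I would record a reduction: since $v_i\in Q(V)$ and $\theta_i\neq0$ give $\theta_iv_i\in Av_i\setminus\{0\}$ while $v_i=\theta_i^{-1}(\theta_iv_i)\in A(\theta_iv_i)$, the result of André quoted before the statement (\cite{Andre}, Theorem 2.5) yields $+_{\theta_iv_i}=+_{v_i}$. Thus every operation occurring in the hypothesis and the conclusion is one of the $+_{v_i}$, and it suffices to compare these.

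The heart of the argument is the master identity. Fix $w=\sum_k\eta_kv_k\in Q(V)$ and $\alpha,\beta\in A$. Using that $\alpha,\beta$ are endomorphisms and that $(V,+)$ is abelian (as established in the text), I would compute
$$\alpha w+\beta w=\sum_k(\alpha\eta_k)v_k+\sum_k(\beta\eta_k)v_k=\sum_k\big((\alpha\eta_k)v_k+(\beta\eta_k)v_k\big)=\sum_k(\alpha\eta_k+_{v_k}\beta\eta_k)v_k,$$
whereas $(\alpha+_w\beta)w=\sum_k\big((\alpha+_w\beta)\eta_k\big)v_k$. Because $w\in Q(V)$ these two expressions are equal, so comparing $v_k$-coefficients — legitimate since $\{v_k\}$ is linearly independent, after rewriting a difference as $\sum_k\gamma_kv_k-\sum_k\delta_kv_k=\sum_k(\gamma_k+_{v_k}(-\delta_k))v_k$ — gives $(\alpha+_w\beta)\eta_k=\alpha\eta_k+_{v_k}\beta\eta_k$. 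Separately, the defining relation of $+_{\eta_kv_k}$ combined with fixed point freeness on the single vector $v_k$ gives $(\alpha+_{\eta_kv_k}\beta)\eta_k=\alpha\eta_k+_{v_k}\beta\eta_k$. Combining these, $(\alpha+_w\beta)\eta_k=(\alpha+_{\eta_kv_k}\beta)\eta_k$, and right-cancelling $\eta_k$ in the group $A^*$ (the case where a side is $0$ being handled by the absence of zero divisors, since $A^*\subseteq\mathrm{Aut}(V)$) gives $\alpha+_w\beta=\alpha+_{\eta_kv_k}\beta$. As $\alpha,\beta$ were arbitrary, $+_w=+_{\eta_kv_k}=+_{v_k}$.

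Finally I would harvest the conclusion. Applying the master identity to $w=v$ gives $+_v=+_{v_i}=+_{\theta_iv_i}$ for every $i$ in the support of $v$, and to $w=v'$ gives $+_{v'}=+_{v_j}=+_{\theta_j'v_j}$ for every $j$ in the support of $v'$. To link all directions I would exploit $Q(V)=V$: for any $a,b\in I$ the element $v_a+v_b$ lies in $V=Q(V)$, so the master identity applied to it forces $+_{v_a}=+_{v_a+v_b}=+_{v_b}$; hence all the $+_{v_i}$ coincide, and therefore $+_v$, $+_{v'}$, and every $+_{\theta_iv_i}$, $+_{\theta_j'v_j}$ are equal, which is the assertion. (In fact the stated hypothesis $+_{\theta_{i_0}v_{i_0}}=+_{\theta_{j_0}'v_{j_0}}$ is then automatically satisfied, so it is not strictly needed once $Q(V)=V$ is assumed; if one prefers to invoke it, it simply identifies the common value.)

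I expect the main obstacle to be the middle step: justifying the coefficient comparison rigorously in the near-field setting, where subtraction must be read as $-\delta_k=(-1_A)\delta_k$ and linear independence invoked via the $+_{v_k}$ structure, and, above all, resisting the use of a right distributive law $(\alpha+_{v_k}\beta)\eta_k=\alpha\eta_k+_{v_k}\beta\eta_k$, which need not hold in a near-field. The auxiliary identity supplied by $+_{\eta_kv_k}$ is precisely what legitimately replaces that missing distributivity and makes the cancellation valid.
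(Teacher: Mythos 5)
Your central computation---the ``master identity''---is sound and is in fact essentially the paper's own argument: expand $(\alpha +_w \beta)w = \alpha w + \beta w$ along the independent set, rewrite the difference of two sums via $+_{v_k}$ and $-\mathrm{id}$, and use linear independence plus fixed point freeness to compare coefficients. But your preliminary ``reduction'' is a genuine error: Andr\'e's Theorem 2.5, quoted in the paper just before the statement, gives a near-field \emph{isomorphism} $(A,+_v,\cdot)\simeq(A,+_{\theta v},\cdot)$ when $\theta v\in Av$, not \emph{equality} of the operations $+_{\theta v}$ and $+_v$. Equality $+_{\theta v}=+_v$ for all $\theta$ is precisely the content of Lemma \ref{plusmultiples}, and it is \emph{equivalent} to $(A,+_v,\cdot)$ being a division ring---a hypothesis not available in Proposition \ref{basisaddition} (indeed this distinction is exactly what separates the general setting of this lemma from Section 4). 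Consequently the appended ``$=+_{v_k}$'' in your master identity, and the concluding ``therefore every $+_{\theta_i v_i}$, $+_{\theta_j' v_j}$ are equal,'' are unjustified as written: what your computation legitimately yields is $+_w=+_{\eta_k v_k}$ for each supported $k$ (which is the paper's conclusion), with no licence to replace $+_{\eta_k v_k}$ by $+_{v_k}$.

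The good news is that your own endgame contains the repair, and it is a genuinely different finish from the paper's. The paper closes by invoking the explicit linking hypothesis to chain $+_v=+_{\theta_{i_0}v_{i_0}}=+_{\theta'_{j_0}v_{j_0}}=+_{v'}$; its proof uses $v,v'\in Q(V)$ but never the full strength of $Q(V)=V$. You instead apply the master identity to auxiliary elements of $Q(V)=V$. Carried out correctly, this should be done not only with $v_a+v_b$ but with the mixed sums $\theta_i v_i + v_j\in V=Q(V)$ for $j\neq i$ (such $j$ exists since $i_0\neq j_0$ forces $|I|\geq 2$): the master identity then gives $+_{\theta_i v_i}=+_{\theta_i v_i+v_j}=+_{v_j}$, similarly $+_{\theta_j' v_j}=+_{v_i}$, and $+_{v_a}=+_{v_a+v_b}=+_{v_b}$ for all $a,b$. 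Chaining these equalities shows that all operations $+_{\theta v_i}$ with $\theta\neq 0$, $i\in I$ coincide, hence $+_v=+_{v'}=+_{\theta_i v_i}=+_{\theta_j' v_j}$, and---as you observed---the stated hypothesis $+_{\theta_{i_0}v_{i_0}}=+_{\theta'_{j_0}v_{j_0}}$ is automatically satisfied, a strengthening the paper does not claim. So your architecture survives, but only through this routing argument; the isomorphism-for-equality substitution must be excised.
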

\begin{proof}
By assumption $v, v' \in Q(V)$, therefore for all $\alpha , \beta \in A$ we get that 
$$\begin{array}{ccl} (\alpha +_{v} \beta) v &=& \alpha ( v) + \beta (v) \\ 
&= & \sum_{i\in I} \alpha \theta_i v_{i} +\sum_{i\in I} \beta  \theta_i v_{i}= \sum_{i\in I} (\alpha \theta_i v_{i} + \beta \theta_i v_{i})  \\ 
&=& \sum_{i\in I} (\alpha +_{\theta_i v_{i}}  \beta) \theta_i  v_{i}. \end{array}$$

Moreover,
$$(\alpha +_{v} \beta) v = \sum_{i\in I } (\alpha +_{v} \beta)\theta_i v_i.$$

Therefore, 
$$ \begin{array}{ccc} 0&=&\sum_{i\in I } (\alpha +_{v} \beta)\theta_i v_i -\sum_{i\in I} (\alpha +_{\theta_i v_{i}}  \beta) \theta_i  v_{i}  \\
&=& \sum_{i\in I } ( (\alpha +_{v} \beta) +_{\theta_i v_i} (-(\alpha +_{\theta_i v_{i}}  \beta))) \theta_i v_i. \end{array}$$

Since $S$ is linearly independent and for any $i \in I$,
$$ (\alpha +_{v} \beta) +_{\theta_i v_i} (- (\alpha +_{\theta_i v_i} \beta ))=0,$$
so we have that $$\alpha +_{v} \beta = \alpha +_{\theta_i v_{i}} \beta. $$ 
For all $\theta_i v_{i}$
(This is deduced from the fixed point freeness of the scalar multiplication which proves that the inverse of $ (\alpha +_{v} \beta)$ for $+$ is $- (\alpha +_{v} \beta).$).
Thus
$$ +_v = +_{\theta_i v_{i}}$$ 
for all $i \in I.$ 
The same can be done to show that $+_{v'} = +_{\theta'_{j} v_{j}}$. 
Proving that 
$$+_{v'} = +_{\theta'_{j_0} v_{j_0}}= +_{\theta_{i_0} v_{i_0}} = +_v.$$
\end{proof}

\begin{remark} 
Let $(V,A)$ be a near-vector space with $dim(V) >1$ and $Q(V)  = V$ and $S= \{v_i \in Q(V)| i \in I \}$ a linearly independent set (possibly infinite) of $V.$ 
If $v = \sum_{i\in I} \theta_i v_{i}\in Q(V)$, then $+_v = +_{\theta_{i}v_{i}}  $, for all $i \in I$. 
\end{remark}

\section{Structural results of Near-vector spaces constructed from division rings}

\subsection{Regular spaces and their decomposition} 
In the next lemma which is not difficult to prove, we discover how the addition $+_v$ that turns $A$ into a near-field for $v\in Q(V) \backslash \{ 0 \}$ results in a special structure on $(A, +_v, \cdot)$.

\begin{lemma}\label{plusmultiples}
Let $V$ be a near-vector space and $v \in Q(V)\{0\}$. 
Then the following are equivalent:
\begin{enumerate} 
\item $+_v= +_{\theta v}$ for all $\theta \in A$;
\item $(A, +_v , \cdot)$ is a division ring. 
\end{enumerate} 
\end{lemma}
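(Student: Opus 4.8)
The plan is to exploit the fact, already recorded in the excerpt, that $(A,+_v,\cdot)$ is a near-field for every $v\in Q(V)\setminus\{0\}$, and to reduce the two conditions to the two distributive laws. A (left) near-field always satisfies one distributive law, and it is a division ring exactly when it satisfies both; so I would first pin down which law is automatic and then show the remaining one is equivalent to condition~1.

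First I would verify that the \emph{left} distributive law $\gamma(\alpha +_v \beta) = \gamma\alpha +_v \gamma\beta$ holds unconditionally. Applying both sides to $v$ and using that each $\gamma\in A$ is an additive endomorphism of $V$ gives
\[
\gamma\big((\alpha +_v \beta)v\big) = \gamma(\alpha v + \beta v) = \gamma(\alpha v) + \gamma(\beta v) = (\gamma\alpha)v + (\gamma\beta)v = (\gamma\alpha +_v \gamma\beta)v,
\]
and fixed point freeness cancels the nonzero $v$. Thus $(A,+_v,\cdot)$ is always a left near-field, and the only obstruction to its being a division ring is the right distributive law $(\alpha +_v \beta)\gamma = \alpha\gamma +_v \beta\gamma$.

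The core of the argument is to identify this right distributive law with condition~1. For $\theta\in A^{*}$ the element $\theta v$ lies in $Av = \mathrm{span}(v)\subseteq Q(V)$ and is nonzero by fixed point freeness, so $+_{\theta v}$ is defined. Applying the two candidate sides of right distributivity (with $\gamma=\theta$) to $v$ gives
\[
\big((\alpha +_v \beta)\theta\big)v = (\alpha +_v \beta)(\theta v), \qquad (\alpha\theta +_v \beta\theta)v = \alpha(\theta v) + \beta(\theta v) = (\alpha +_{\theta v}\beta)(\theta v),
\]
where the last equality is the defining property of $+_{\theta v}$. Since $\theta v\neq 0$, fixed point freeness shows the two sides agree for all $\alpha,\beta$ if and only if $\alpha +_v\beta = \alpha +_{\theta v}\beta$ for all $\alpha,\beta$, i.e. $+_v = +_{\theta v}$. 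Ranging over all $\theta\in A^{*}$ matches the right distributive law with condition~1 precisely.

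Finally I would assemble the equivalence. If condition~1 holds then both distributive laws hold, so the near-field $(A,+_v,\cdot)$ is a division ring (associativity of $\cdot$ is automatic since it is composition, $0$ is absorbing because it is the zero endomorphism, and $(A,+_v)$ is abelian because $V$ is); conversely a division ring satisfies right distributivity, which by the computation forces $+_v = +_{\theta v}$ for every $\theta\in A^{*}$. The case $\theta=0$ is vacuous, since $0\cdot v = 0\notin Q(V)\setminus\{0\}$, so $+_{\theta v}$ is only considered for $\theta\neq 0$. I expect the only delicate points to be the bookkeeping of the fixed-point-free cancellations and the justification that $\theta v\in Q(V)$; everything else is a direct unwinding of the definition of $+_{(-)}$.
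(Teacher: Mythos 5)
Your proof is correct and takes essentially the same route as the paper's: the paper's entire proof consists of the observation that $+_v = +_{\theta v}$ holds if and only if $(\alpha +_v \beta)\theta = \alpha\theta +_v \beta\theta$ for all $\alpha, \beta \in A$, which is precisely your identification of condition~1 with the right distributive law. You simply make explicit the fixed-point-freeness cancellations and the automatic left distributivity that the paper leaves implicit in the known fact that $(A, +_v, \cdot)$ is a near-field.
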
 
\begin{proof}
It is not difficult to see that:
$$+_v = +_{\theta v}$$ if and only if  $$(\alpha +_v \beta )\theta = (\alpha \theta +_v \beta \theta), \ \forall \alpha, \beta \in A.$$
\end{proof}

The following theorem, attempts to understand the relationship between properties of $Q(V)$, $+_v$ and regularity.

\begin{theorem}\label{vstheorem}
Let $(V, A)$ be a near-vector space. 

The following assumptions are equivalent: 
\begin{enumerate} 
\item For any $v \in Q(V) \backslash \{ 0 \},$ $V$ is a vector space over the near-field $(A ,+_v, \cdot)$ ;
\item There is a $v \in Q(V) \backslash \{ 0 \},$ such that  $V$ is a vector space over the near-field $(A ,+_v, \cdot)$;
\item[1.'] For any $v \in Q(V) \backslash \{ 0 \},$ $V$ is a vector space over the near field $(A ,+_v, \cdot)$ and $(A ,+_v, \cdot)$ is a division ring;
\item[2.']  There is a $v \in Q(V) \backslash \{ 0 \},$ such that  $V$ is a vector space over the near-field $(A ,+_v, \cdot)$ and $(A ,+_v, \cdot)$ is a division ring;
\item[3.] $Q(V)=V$ and $(A, +_v , \cdot )$ is a division ring, for all $v \in Q(V) \backslash \{ 0 \}$. 
\item[4.] $+_v = +_w$ for all $v, w\in Q(V) \backslash \{ 0 \}$. 
\item[5.] $R_{w}(V) = V$ for all $w\in Q(V) \backslash \{ 0 \}$. 
\item[6.] $V$ is regular and for any $v \in V$, $+_v = +_{\theta v}$ for all $v \in Q(V) \backslash \{ 0 \}$ and $\theta \in A$. 
\item[7.] $V$ is regular and for any $v \in Q(V) \backslash \{ 0 \},$ $(A, +_v , \cdot )$ is a division ring. 
\end{enumerate} 

\end{theorem}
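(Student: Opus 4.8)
The statement is a cycle of equivalences among many conditions, so the natural strategy is to prove a single chain of implications that closes into a loop, rather than proving each pairwise equivalence separately. First I would observe that the two unprimed conditions 1 and 2 assert that $V$ is a vector space over $(A,+_v,\cdot)$ for all $v$, respectively for some $v$; since ``vector space over a near-field'' forces both distributive laws, and the right distributive law $(\alpha+_v\beta)x = \alpha x + \beta x$ holds for every $x$ precisely when $R_v(V)=V$, the content of these conditions is the full distributivity of $A$ acting on $V$. A key preliminary remark is that a near-field satisfying \emph{both} distributive laws is exactly a division ring, which is why conditions 1, 2 and their primed versions 1', 2' should collapse: the ``division ring'' clause in the primed versions is automatic once $V$ is a genuine vector space over $(A,+_v,\cdot)$. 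So I expect $1 \Leftrightarrow 1'$ and $2 \Leftrightarrow 2'$ to be immediate, and the obvious implication $1 \Rightarrow 2$ to hold trivially.

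The plan is then to arrange the substantive implications as a cycle, for instance $2' \Rightarrow 3 \Rightarrow 7 \Rightarrow 6 \Rightarrow 4 \Rightarrow 5 \Rightarrow 1'$ (or a convenient reordering), threading through Lemma~\ref{plusmultiples}, the Key Lemma (Proposition~\ref{basisaddition}), and the definition of $R_u$. For $5 \Rightarrow 1$: if $R_w(V)=V$ for all $w\in Q(V)\setminus\{0\}$, then by definition $(\alpha+_w\beta)x = \alpha x + \beta x$ for all $x\in V$, which is exactly the missing distributive law making $V$ a vector space over $(A,+_w,\cdot)$; the reverse inclusion ($1 \Rightarrow 5$) is the same identity read backwards. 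For $4 \Rightarrow 5$: if all the $+_v$ coincide to a single addition $+$, then for any $x\in V$ we can write $x$ in terms of a basis in $Q(V)$ and use that $+_x = +$ agrees with the common addition to verify the kernel condition. For $3 \Leftrightarrow 7$: Definition~\ref{def4} of regularity plus the remark that dimension-$\le 1$ spaces have $Q(V)=V$, combined with Lemma~\ref{plusmultiples}, should let me pass between ``$Q(V)=V$'' and ``$V$ regular,'' since division-ring structure forces $+_v=+_{\theta v}$ and compatibility then propagates $Q$ across the whole space.

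The main obstacle I anticipate is the interface between regularity and the statement $Q(V)=V$ --- specifically proving that if $(A,+_v,\cdot)$ is a division ring for every $v$ and $V$ is regular, then in fact every vector lies in $Q(V)$. Here I would lean heavily on the Key Lemma: writing an arbitrary $v=\sum_i \theta_i v_i$ over a basis, the hypothesis that all the $+_{\theta_i v_i}$ agree (which the division-ring condition gives via Lemma~\ref{plusmultiples}, since $+_{v_i}=+_{\theta_i v_i}$) lets me invoke Proposition~\ref{basisaddition} to conclude $+_v = +_{v_i}$, and then run the computation $(\alpha+_v\beta)v = \sum_i(\alpha+_{v_i}\beta)\theta_i v_i = \alpha v + \beta v$ to certify $v\in Q(V)$ and indeed $R$ equal to all of $V$. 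The delicate point is ensuring the addition $+_{v}$ on $A$ is genuinely independent of $v$ before concluding the vector-space law, so the order of the cycle must be chosen so that the single common addition $+$ is available at the step where global distributivity is claimed; getting that ordering right, and verifying that regularity supplies enough compatible pairs to trigger the Key Lemma's hypothesis $+_{\theta_{i_0}v_{i_0}} = +_{\theta'_{j_0}v_{j_0}}$, is where the real work lies.
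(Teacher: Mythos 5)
Your overall architecture is the same as the paper's: collapse the primed and unprimed conditions by noting that full distributivity forces $(A,+_v,\cdot)$ to be a division ring, identify condition 5 with the vector-space conditions through the definition of $R_w$, and route the remaining implications through Lemma \ref{plusmultiples} and Proposition \ref{basisaddition}, with the real content concentrated in the implication ``regular plus division ring implies $Q(V)=V$'' (the paper's $6.\Rightarrow 4.$).

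However, your sketch of that crucial step has a genuine gap, in fact two intertwined ones. First, you claim that ``all the $+_{\theta_i v_i}$ agree'' follows from the division-ring condition via Lemma \ref{plusmultiples}. It does not: that lemma only gives $+_{\theta_i v_i}=+_{v_i}$ for each fixed $i$, i.e.\ it collapses additions along a single coordinate axis; agreement \emph{across} different axes $i\neq j$ is precisely what can fail (e.g.\ $\mathbb{R}^3$ with $\alpha(x,y,z)=(\alpha^3x,\alpha^5y,\alpha^3z)$, where every $(A,+_v,\cdot)$ is a field yet $+_{e_1}\neq+_{e_2}$), and supplying it is exactly the job of the regularity hypothesis. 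Second, you then invoke Proposition \ref{basisaddition} on the arbitrary element $v=\sum_i\theta_iv_i$ to conclude $+_v=+_{v_i}$; this is circular, since the Key Lemma's hypotheses require $v\in Q(V)$ --- indeed $+_v$ is not even defined unless $v\in Q(V)$ --- and membership of $v$ in $Q(V)$ is what you are trying to prove. The paper avoids both problems by applying the Key Lemma not to $v$ but to the compatibility witnesses that regularity provides: for basis vectors $v_i,v_j$ there is $\lambda_{i,j}$ with $\omega=v_i+\lambda_{i,j}v_j\in Q(V)$, so the Key Lemma (applied to $\omega$, which legitimately lies in $Q(V)$) gives $+_{v_i}=+_{\lambda_{i,j}v_j}$, and Lemma \ref{plusmultiples} gives $+_{\lambda_{i,j}v_j}=+_{v_j}$. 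Once all basis additions coincide in a single $+$, the computation
$$\alpha v+\beta v=\sum_i(\alpha+_{v_i}\beta)\theta_iv_i=(\alpha+\beta)\sum_i\theta_iv_i=(\alpha+\beta)v$$
certifies $v\in Q(V)$ without ever mentioning $+_v$. You do flag the ordering issue yourself in your final paragraph, but the sketch as written runs it in the wrong order, and the step where regularity enters --- the only genuinely hard step of the theorem --- is the one you leave unexecuted.
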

\begin{proof} 
$$1. \Leftrightarrow 2. \Leftrightarrow 1.' \Leftrightarrow 2.'$$
is not hard to prove using Lemma \ref{plusmultiples}.

$2. \Rightarrow 3$

If there is $v\in V$ such that $V$  is a vector space over the near-field $(A ,+_v, \cdot)$ then for any $w\in V$ we have 

$$\alpha w + \beta w = (\alpha +_v \beta) w.$$ 

Therefore, $w\in Q(V)$. 

$3. \Rightarrow 4.$

Let $(V,A)$ be a near-vector space such that $Q(V)=V$.  We denote $B = \{ v_i \in Q(V) | i \in I\}$ a basis for $V$. 
For any two nonzero $v, w \in V$, either $v= \theta w$ and $+_v=+_w$, according to Lemma \ref{plusmultiples} since $(A, +_v, \cdot)$ is a division ring. Otherwise we have  $v = \sum_{i\in I}  \theta_i v_{i} \in V$ and $v' = \sum_{i\in I} \theta_i' v_{i} \in V$, and there exist $i\neq j\in I$ such that $\theta_i$ and $\theta_j'$ are nonzero, therefore we can take $w =\theta_i v_{i} + \theta'_{j} v_{j}$, and since by assumption $w\in Q(V)$ we can apply Proposition \ref{basisaddition} to obtain that 
$$+_v =+_w= +_{v'}.$$




$4. \Leftrightarrow 5 \Rightarrow 6. \Leftrightarrow 7.$ is  trivial the last equivalence being a consequence again of Lemma \ref{plusmultiples}.

$6. \Rightarrow 4. $  Suppose that $V$ is regular and for any $v \in V$, $+_v = +_{\theta v}$ for all $v\in V$ and
$\theta \in A$. 

Let $\{ v_i \in Q(V), i \in I \}$ be a basis for $V$. 

For any $i \neq j \in I$,

Then, from the regularity of $V$, there is a $\lambda_{ i,j} \in A$ such that 

$$\omega= v_i + \lambda_{i,j}  v_j \in Q(V).$$

Again applying Proposition \ref{basisaddition}, we get 
$$+_\omega = +_{v_i} = +_{ \lambda_{i,j}  v_j}= +_{v_j}=:+ $$

where the last equality holds since by assumption $(A, +_{v_j} , \cdot)$ is a division ring. 

Let $v \in V$ be nonzero, we have $v= \sum_{i\in I} \theta_i v_{i}.$
Indeed, for any $\alpha , \beta \in A$,
$$ \alpha v + \beta v = \sum_{i\in I}(\alpha \theta_ i v_i + \beta \theta_i v_{i} ) =  \sum_{i\in I} (\alpha \theta_ i  +  \beta \theta_i ) v_{i} =  \sum_{i\in I} (\alpha   +  \beta) \theta_i v_{j_i}=  (\alpha + \beta ) v.$$ 

Therefore $v\in Q(V)$ and $Q(V)=V$. 
\end{proof}

Classical linear algebra is a particular case of a near-vector space corresponding precisely to the regular case, when $(A, +_v , \cdot )$ is a field for every nonzero $v\in Q(V).$

As an application of the previous theorem, 
we reprove the Decomposition Theorem for the division ring case.

\begin{corollary}\label{decomposition}
Let $(V,A)$ be a near-vector space such that for all $v \in Q(V)^{*},$ $(A,+_v,\cdot)$ is a division ring. Then $V$ is the direct sum of regular near-vector spaces ${\cal V}_j$ for $j \in K$ for some index set $K$ such that each $u \in Q(V)^{*}$ lies in precisely one summand ${\cal V}_j$. The subspaces ${\cal V}_j$ are maximal regular near-vector spaces.
\end{corollary}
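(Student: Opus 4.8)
The plan is to prove the Decomposition Theorem (Corollary \ref{decomposition}) by defining an equivalence relation on $Q(V)^*$ using compatibility and Theorem \ref{vstheorem}, then collecting the resulting classes into subspaces that turn out to be regular and maximal. First I would define a relation $\sim$ on $Q(V)^*$ by declaring $u \sim w$ precisely when $+_u = +_w$. By the hypothesis that $(A,+_v,\cdot)$ is a division ring for every $v \in Q(V)^*$, Lemma \ref{plusmultiples} guarantees $+_u = +_{\theta u}$ for all $\theta \in A$, so each ray $Au \cap Q(V)^*$ lies in a single class; the division ring hypothesis is exactly what makes this relation well-behaved. That $\sim$ is reflexive and symmetric is immediate, and transitivity is obvious since it is defined by equality of additions. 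For each class I would set $\mathcal{V}_j = \mathrm{span}(C_j) $ where $C_j$ is a class, equivalently the subgroup of $(V,+)$ generated by $A C_j$.

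The key step is to show that each $\mathcal{V}_j$ is itself a regular near-vector space and that the $\mathcal{V}_j$ give a direct sum decomposition of $V$. To see regularity, I would apply the equivalence in Theorem \ref{vstheorem} restricted to $\mathcal{V}_j$: on the subspace $\mathcal{V}_j$ all the elements of the generating class share the same addition $+_u$, so condition 4. of Theorem \ref{vstheorem} holds for $\mathcal{V}_j$ viewed as a near-vector space over $A$, whence $\mathcal{V}_j$ is regular and moreover satisfies $Q(\mathcal{V}_j) = \mathcal{V}_j$. For the direct sum, I would choose a basis $B$ of $V$ lying in $Q(V)$, partition $B$ according to which class each basis vector belongs to, and argue that a basis vector $v_i$ and $v_k$ are compatible if and only if they lie in the same class — here Proposition \ref{basisaddition} is the crucial tool, since if $v_i + \lambda v_k \in Q(V)$ for some nonzero $\lambda$, then $+_{v_i} = +_{\lambda v_k} = +_{v_k}$, forcing them into the same class. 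Thus the decomposition $V = \bigoplus_j \mathcal{V}_j$ follows from grouping the basis, and each nonzero quasi-kernel element lies in precisely one summand because its addition $+_u$ determines its class uniquely.

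Finally I would establish maximality. Suppose $\mathcal{W}$ is a regular near-vector subspace properly containing some $\mathcal{V}_j$; then $\mathcal{W}$ contains a quasi-kernel element $w$ with $+_w \neq +_u$ for the class addition $+_u$ of $\mathcal{V}_j$. But regularity of $\mathcal{W}$ combined with Theorem \ref{vstheorem} condition 4. would force $+_w = +_u$ on all of $\mathcal{W}$, a contradiction. Hence no regular subspace strictly enlarges $\mathcal{V}_j$, giving maximality.

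The main obstacle I expect is the verification that the additive structure genuinely splits as a direct sum compatibly with the scalar action — that is, checking that an arbitrary $v \in V$ decomposes uniquely as a sum of components lying in the distinct $\mathcal{V}_j$ and that the quasi-kernel of $V$ is exactly the union of the $Q(\mathcal{V}_j)$. The subtlety is that elements outside $Q(V)$ need not respect any single $+_v$, so I must work through a chosen basis and lean on Proposition \ref{basisaddition} to control how the compatibility of basis vectors governs membership in the summands; getting the bookkeeping right so that compatibility is genuinely an equivalence relation partitioning the basis (rather than merely a reflexive-symmetric relation) is where the real care is needed.
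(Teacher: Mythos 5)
Your proposal is correct and takes essentially the same route as the paper's own proof: the paper also partitions the nonzero quasi-kernel by equality of the induced additions (defining ${\cal V}_i =\{v \,|\, +_v = +_{v_i}\}$ from a chosen basis rather than via an explicit equivalence relation), uses Proposition \ref{basisaddition} and Lemma \ref{plusmultiples} for closure of the summands, invokes Theorem \ref{vstheorem} for their regularity, obtains the direct sum by grouping basis vectors, and proves maximality by the identical contradiction argument. The only difference is cosmetic: you build the summands as spans of equivalence classes, whereas the paper defines them by the membership condition and then verifies they are subspaces.
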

\begin{proof}
Let ${\cal{B}}= \{v_i \,|\, i \in I\}$ be a canonical basis of $V$ for some index set $J$. We put ${\cal V}_i =\{v \in V \,|\, +_v = +_{v_i}\}.$ We claim that $V = \oplus_{i \in K}{\cal V}_i,$ and that the ${\cal V}_i$ are maximal regular subspaces of $V$ where $K$ is a index set such that for each $i\in J$ there is a unique $k\in K$ such that $v_i \in {\cal V}_k$. For $i \in K,$ the ${\cal V}_i$  are subspaces of $V$ since they are closed under addition by Proposition \ref{basisaddition} and closed under scalar multiplication by Lemma \ref{plusmultiples}. We claim that they are maximal regular subspaces of $V$.  They are regular by Theorem \ref{vstheorem}, since $(A,+_v,\cdot)$ is a division ring and we have that $+_v = +_w$ for all $v,w \in {\cal V}_i.$  Moreover, $Q({\cal V}_i) = {\cal V}_i$ for $i \in K.$  We claim that $V= \oplus_{j \in K} {\cal V}_j.$ %
The sum is clearly direct since we have a basis. It is also clear that $\oplus_{j \in K} {\cal V}_j \subseteq V$. Let $v \in V,$ then since ${\cal B}$ is a basis of $V$, it is clear that $v = \sum_{i\in I} \theta_{i} v_i$ for some $\theta_{i} \in A$ and $v_i \in {\cal B}.$  We set $S_j= \{i\in I | v_i \in {\cal V}_j\}$ for all $j \in K$, then $v= \sum_{j\in K}  w_j$ with $w_j = \sum_{i \in S_j} \theta_i v_i$. In particular, $w_j \in {\cal V}_j$. Thus $v \in \oplus_{j \in K} {\cal V}_j$ and so $V = \oplus_{j \in K} {\cal V}_j.$

To show that each $u \in Q(V)^{*}$ lies in precisely one summand ${\cal V}_j$ we prove that $Q(V) = \uplus_{i=1}^{k} Q({\cal V}_i).$ $\cup_{i=1}^{k} Q({\cal V}_i) \subseteq Q(V)$ is clear. Now let $u \in Q(V),$ then $+_u = +_{u_i}$ for some unique $i \in K$, by Proposition \ref{basisaddition}, since we are supposing that $(A, +_v, \cdot)$ is a division ring for every $v\in Q(V)\backslash\{0\}$.


Finally we need to show the maximality of each ${\cal V}_j$ for $j \in K.$ Suppose that there exists a regular subspace $M$ such that ${\cal V}_j \subset M.$ Then since $M$ is regular, by Theorem \ref{vstheorem}, $+_v = +_w$ for all $v,w \in M.$ Take a $m \in M \backslash {\cal V}_j$ and $v \in {\cal V}_j.$ Then $+_m = +_{v},$ which contradicts our initial partition.  
For the uniqueness of the decomposition, suppose that there are two decompositions $V =  \oplus_{j \in K} {\cal V}_j$ and $V =  \oplus_{s \in R} {\cal W}_s.$ If $v \in {\cal V}_j$ for some $j \in K,$ then by Theorem \ref{vstheorem}, $+_v = +_{v_j}$. But since $V =  \oplus_{s \in R} {\cal W}_s,$  $v_i \in {\cal W}_s$ for some $s \in R$ and again by Theorem \ref{vstheorem}, $+_{v_j} = +_w$ for all $w \in {\cal W}_s.$  Now we have that $+_w = +_{v_i} = +_v,$ so $v \in {\cal W}_s.$
Similarly,  ${\cal W}_s \subseteq {\cal V}_j$.
\end{proof}

\begin{corollary}
If $(V,A)$ is as before and ${\cal{B}}= \{v_i \,|\, i \in I\}$ a canonical basis of $V,$ $K$ a index set such that for each $i\in I$ there is a unique $k\in K$ such that $v_i \in {\cal V}_k$ and we choose $j \in K$.
\begin{itemize}
\item $\{v_i \,|\ v_i \in {\cal V}_j\}$ is a basis for ${\cal V}_j.$ \\
\item Referring to Lemma \ref{nearfielddecomp}, $\oplus_{i=1}^{s} A v_r$ is a subspace of ${\cal V}_j$ where $v_r \in {\cal V}_j$ for $r \in \{1,\ldots,s\}.$  
\end{itemize}
\end{corollary}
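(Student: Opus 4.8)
The plan is to read both bullet points off the decomposition $V = \oplus_{j \in K} {\cal V}_j$ together with the two facts already secured in Corollary \ref{decomposition}: that each ${\cal V}_j$ is a maximal regular subspace and that $Q({\cal V}_j) = {\cal V}_j$. Since a basis is by definition an independent generating subset of the quasi-kernel, the identity $Q({\cal V}_j) = {\cal V}_j$ is what lets me treat ${\cal V}_j$ like an ordinary vector space for the bookkeeping below.

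First I would prove that $\{v_i \mid v_i \in {\cal V}_j\}$ is a basis of ${\cal V}_j$. Linear independence is immediate, since the set is a subset of the basis ${\cal B}$ of $V$ and any subset of a linearly independent set is linearly independent. For the generating property I would take an arbitrary $w \in {\cal V}_j$ and expand it in ${\cal B}$ as $w = \sum_{i \in I} \theta_i v_i$, then group the terms by the summand to which each $v_i$ belongs, writing $w = \sum_{k \in K} w_k$ with $w_k = \sum_{v_i \in {\cal V}_k} \theta_i v_i \in {\cal V}_k$. Because $w \in {\cal V}_j$ and the sum $V = \oplus_{k \in K} {\cal V}_k$ is direct, uniqueness of the decomposition forces $w_k = 0$ for $k \neq j$, so $w = \sum_{v_i \in {\cal V}_j} \theta_i v_i$. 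As every such $v_i$ lies in ${\cal V}_j = Q({\cal V}_j)$, the set $\{v_i \mid v_i \in {\cal V}_j\}$ is an independent generating subset of $Q({\cal V}_j)$, hence a basis.

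For the second bullet I would observe that $\{v_r \mid r \in \{1, \ldots, s\}\}$ is a finite independent subset of $Q({\cal V}_j) = {\cal V}_j$, being part of ${\cal B}$ with all $v_r \in {\cal V}_j$. Applying Definition \ref{def2} to the near-vector space $({\cal V}_j, A)$, the subgroup of $({\cal V}_j, +)$ generated additively by $A\{v_1, \ldots, v_s\}$ is a subspace of ${\cal V}_j$. Because the $v_r$ are independent, every element of this subgroup has a unique representation $\sum_{r=1}^s \alpha_r v_r$, so the subgroup is precisely the internal direct sum $\oplus_{r=1}^s A v_r$; this is the coordinate-axis description from Lemma \ref{nearfielddecomp} applied inside the summand.

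The main obstacle I anticipate is justifying that the subgroup generated by $A\{v_1,\dots,v_s\}$ genuinely equals the internal direct sum and stays inside ${\cal V}_j$. The containment uses that ${\cal V}_j$ is closed under the $A$-action and under the ambient addition $+$, which on ${\cal V}_j$ is the single near-field addition $+_{v_j}$ supplied by Theorem \ref{vstheorem} (each ${\cal V}_j$ being regular with $+_v = +_{v_j}$ throughout); the directness of the sum is exactly the independence of the $v_r$. Once these two observations are recorded, the remaining verifications are routine, and the first bullet feeds the second by confirming that the ${\cal V}_j$ admit the bases from which such coordinate subspaces are cut out.
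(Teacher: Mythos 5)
Your proposal is correct, and it supplies exactly the routine verification that the paper leaves implicit: the paper states this corollary without proof, as an immediate consequence of Corollary \ref{decomposition}, and your argument (restricting the basis ${\cal B}$ to ${\cal V}_j$, using directness of $V=\oplus_{k\in K}{\cal V}_k$ and closure of each ${\cal V}_k$ under addition and scalar multiplication for the first bullet, then invoking $Q({\cal V}_j)={\cal V}_j$ and Definition \ref{def2} for the second) is precisely the intended route. The only point worth making explicit is that the subgroup generated by $A\{v_1,\ldots,v_s\}$ collapses to $\{\sum_{r=1}^{s}\alpha_r v_r \mid \alpha_r\in A\}$ because each $v_r$ lies in the quasi-kernel, so repeated terms combine via $\alpha v_r+\beta v_r=(\alpha+_{v_r}\beta)v_r$; with that observation recorded, your proof is complete.
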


\begin{example}
Consider the near-vector space $({\mathbb{R}}^{3},\mathbb{R}),$ where scalar multiplication is defined for all $(x,y,z) \in{\mathbb{R}}^{3}$ and $\alpha \in \mathbb{R}$ by 
$$\alpha(x,y,z) = (\alpha^{3}x, \alpha^{5}y,\alpha^{3}z).$$
Then by applying Corollary \ref{decomposition}, we have that 
$${\cal V}_1 =\{(a,b,c) \in {\mathbb{R}}^{3} |\forall \alpha , \beta \in \mathbb{R}, \alpha+_{(a,b,c)} \beta = (\alpha^{3}+\beta^{3})^{\frac{1}{3}} =\{(a,0,c)|a,c \in {\mathbb{R}}\},$$ while 
$${\cal V}_2 =\{(a,b,c) \in {\mathbb{R}}^{3} |\forall \alpha , \beta \in \mathbb{R},\alpha +_{(a,b,c)} \beta = (\alpha^{5}+\beta^{5})^{\frac{1}{5}}\} =\{(0,b,0)| b \in {\mathbb{R}}\}$$ and
${\mathbb{R}}^{3} = {\cal V}_1 \oplus {\cal V}_2.$ 
\end{example}

\subsection{The structure of Span} 

It is immediate to prove the following lemmas:

\begin{lemma} \label{dependent} 
Let $(V,A)$ be a near-vector space. 

$v_0, \cdots, v_n\in Q(V)$ are linearly dependent if there is $i_0 \in \{ 1, \cdots , n\}$ such that $v_{i_0}\in span ( v_1, \cdots, v_n)$. 
\end{lemma}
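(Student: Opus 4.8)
The plan is to prove Lemma \ref{dependent} directly from the definition of linear dependence, unwinding what membership in $\mbox{span}(v_1,\ldots,v_n)$ means for elements of the quasi-kernel. Recall that since each $v_i \in Q(V)$, the earlier discussion guarantees that $\mbox{span}(v_i) = Av_i$ and, more generally, for a set $S \subseteq Q(V)$ the span is the set of all linear combinations $\sum \alpha_i v_i$ with $\alpha_i \in A$. So the hypothesis $v_{i_0} \in \mbox{span}(v_1,\ldots,v_n)$ should be read as $v_{i_0} = \sum_{i} \alpha_i v_i$ for suitable $\alpha_i \in A$.

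First I would make precise which span is meant. The cleanest reading consistent with the statement is that $v_{i_0} \in \mbox{span}(v_1,\ldots,v_n)$ where the index $i_0$ is excluded from the spanning set on the right — i.e. $v_{i_0} \in \mbox{span}(v_1,\ldots,\widehat{v_{i_0}},\ldots,v_n)$ — since otherwise the membership is trivially true for every index and the conclusion would assert that any list of quasi-kernel vectors is dependent, which is false. Under that reading, I would write $v_{i_0} = \sum_{i \neq i_0} \alpha_i v_i$ for some $\alpha_i \in A$, using that the span of a subset of $Q(V)$ equals the set of its linear combinations.

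From here the argument is short. I would rearrange to obtain
$$ (-1_A) v_{i_0} + \sum_{i \neq i_0} \alpha_i v_i = 0, $$
which is a relation of the form $\sum_{i} \beta_i v_i = 0$ where $\beta_{i_0} = -1_A \neq 0_A$. Since the coefficient of $v_{i_0}$ is the nonzero scalar $-1_A$ (recall from the preliminary analysis that $-1_A$ is the additive inverse of $1_A$ and is nonzero whenever $\mbox{char}(V) \neq 2$, and in general $-1_A \neq 0_A$), not all coefficients in this relation vanish. By the definition of linear independence, the vectors $v_1,\ldots,v_n$ therefore fail to be linearly independent, which is exactly the assertion that they are linearly dependent.

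The only genuine obstacle is the bookkeeping around what ``span'' denotes and confirming that $-1_A \neq 0_A$ so that the produced relation is nontrivial; both are settled by the preliminary material (the identification $\mbox{span}(v) = Av = L(v)$ on $Q(V)$, and the fact that $-1_A$ is a genuine nonzero scalar acting as $-Id$). Because the paper explicitly flags this as an ``immediate'' lemma, I would keep the write-up to the two or three lines above rather than belabour the span machinery, citing the earlier observation that span and linear combinations coincide on $Q(V)$.
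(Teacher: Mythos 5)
Your proof is correct: the paper in fact offers no argument for this lemma (it is introduced only with ``It is immediate to prove the following lemmas''), and your write-up --- using that the span of a subset of $Q(V)$ equals the set of linear combinations, then rewriting $v_{i_0}=\sum_{i\neq i_0}\alpha_i v_i$ as $(-1_A)v_{i_0}+\sum_{i\neq i_0}\alpha_i v_i=0$ and noting the coefficient $-1_A\neq 0_A$ (even in characteristic $2$, where $-1_A=1_A$) --- is exactly the one-line verification the authors intend, supplied with the right citations to the preliminary material. You were also right to flag the defect in the statement: as literally written (with $i_0\in\{1,\ldots,n\}$ and the span taken over $v_1,\ldots,v_n$) the hypothesis is vacuous, and the only sensible reading is the one you adopt, in which $v_{i_0}$ is excluded from the spanning set.
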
 

We have explicitly seen in Lemma \ref{nearfielddecomp} how $V \simeq A^{(I)}$ allows us to retranslate the following lemma to any near-vector space.

\begin{theorem} \label{spanstruc} 
Let $V=A^{(I)}$ as in Lemma \ref{nearfielddecomp}, 2. and $(A, +_v, \cdot)$ be a division ring for any nonzero $v \in V$. Let $v \in V.$

According to Corollary \ref{decomposition}, we have the regular decomposition $V= \oplus_{j \in K} {\cal V}_j$, so that we can write $v$ uniquely as $v = \sum_{j \in K} v_i$ where $v_i \in {\cal V}_i$. If $N$ is the subset of $K$, $N = \{ i \in K | v_i \neq 0\},$
we have that 
$$span (v)=L(v)= \oplus A v_i$$ 
and 
$$ dim (v) = |N|.$$ 
\end{theorem}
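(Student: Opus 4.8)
The plan is to establish the three-way equality by a short circle of inclusions and then read off the dimension from the decomposition. Write the regular decomposition of Corollary \ref{decomposition} as $V=\oplus_{j\in K}{\cal V}_j$, so that $v=\sum_{j\in K}v_j$ with $v_j\in{\cal V}_j$ and $N=\{j\in K\mid v_j\neq 0\}$. By Theorem \ref{vstheorem} each ${\cal V}_j$ is regular with $Q({\cal V}_j)={\cal V}_j$ and $+_w=+_{v_j}$ for every nonzero $w\in{\cal V}_j$; hence each $v_j$ ($j\in N$) lies in $Q(V)$, and $Av_j$ is a subgroup since $\alpha v_j+\beta v_j=(\alpha+_{v_j}\beta)v_j$. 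As the $v_j$ sit in distinct summands they are linearly independent, so $W:=\oplus_{j\in N}Av_j$ is exactly the subspace generated by $A\{v_j\mid j\in N\}$ in the sense of Definition \ref{def2}, and $v\in W$. Since any subspace is an $A$-invariant subgroup, $\mathrm{span}(v)\subseteq W$; and $\mathrm{span}(v)$ being itself such a subgroup containing $v$ forces $L(v)\subseteq\mathrm{span}(v)$. This gives $L(v)\subseteq\mathrm{span}(v)\subseteq W$, so everything reduces to $W\subseteq L(v)$.

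To handle $W\subseteq L(v)$ I would first reduce it to a statement purely about the additions. Using that each $\alpha\in A$ is an endomorphism and the division-ring right-distributivity valid inside each ${\cal V}_j$, a linear combination computes componentwise as $\sum_k\alpha_kv=\sum_{j\in N}(\alpha_1+_{v_j}\cdots+_{v_j}\alpha_t)v_j$. Therefore $L(v)=\{\sum_{j\in N}\gamma_jv_j\mid(\gamma_j)_{j\in N}\in\langle\Delta\rangle\}$, where $\langle\Delta\rangle$ is the subgroup of $G:=\prod_{j\in N}(A,+_{v_j})$ generated by the diagonal $\Delta=\{(\alpha)_{j\in N}\mid\alpha\in A\}$. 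Thus $W\subseteq L(v)$ is equivalent to $\langle\Delta\rangle=G$. Because distinct summands of a regular decomposition carry distinct additions (by the construction ${\cal V}_j=\{w\mid+_w=+_{v_j}\}$ in Corollary \ref{decomposition}), the $+_{v_j}$ for $j\in N$ are pairwise distinct, and this is precisely the hypothesis I expect to need.

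I would then prove $\langle\Delta\rangle=G$ by induction on $|N|$. It suffices to produce, for one coordinate $n\in N$, a nonzero element of $\langle\Delta\rangle$ supported only on $n$: by $A$-invariance and the division-ring property this fills the whole axis, and together with the induction hypothesis (which makes the projection $\langle\Delta\rangle\to\prod_{j\neq n}(A,+_{v_j})$ surjective) it yields $\langle\Delta\rangle=G$. To find such an element, suppose none exists; then that projection is injective, hence an isomorphism, and its inverse produces an $A$-equivariant group homomorphism $f\colon\prod_{j\neq n}(A,+_{v_j})\to(A,+_{v_n})$ with $f$ sending the diagonal value $\alpha$ to $\alpha$. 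Splitting $f$ over the coordinates and using $A$-equivariance forces each coordinate map to have the form $a\mapsto ac_j$; additivity then gives $(x+_{v_j}y)c_j=(x+_{v_n}y)c_j$, so $c_j=0$ whenever $+_{v_j}\neq+_{v_n}$, i.e.\ for every $j\neq n$. Hence $f\equiv 0$, contradicting $f(\alpha)=\alpha$ on the diagonal. This homomorphism-rigidity argument is the heart of the proof and the only real obstacle, and it is exactly where pairwise-distinctness of the additions — equivalently, that the ${\cal V}_j$ are genuinely different regular components — is indispensable.

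Finally, for the dimension: the expression $v=\sum_{j\in N}1_A\,v_j$ with $v_j\in Q(V)\setminus\{0\}$ and $1_A\neq 0$ gives $\dim(v)\le|N|$. For the reverse inequality, take any representation $v=\sum_{l=1}^m\alpha_lu_l$ with $u_l\in Q(V)\setminus\{0\}$ and $\alpha_l\neq 0$. Since $Q(V)=\biguplus_{j\in K}{\cal V}_j$ by Corollary \ref{decomposition}, each $u_l$ lies in a single component, so the component-$j$ part of $v$ equals $\sum_{l\,:\,u_l\in{\cal V}_j}\alpha_lu_l$; for each $j\in N$ this part must be nonzero, forcing at least one $u_l$ in ${\cal V}_j$. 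Hence $m\ge|N|$, and $\dim(v)=|N|$. Steps one, two and four are routine bookkeeping around the decomposition, so I expect essentially all the difficulty to be concentrated in the diagonal-generation step.
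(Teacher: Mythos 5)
Your proof is correct, and it reaches the one hard inclusion $\oplus_{j\in N}Av_j\subseteq L(v)$ by a genuinely different mechanism than the paper. Your outer skeleton (the chain $L(v)\subseteq\mathrm{span}(v)\subseteq\oplus_{j\in N}Av_j$, plus the fact from Corollary \ref{decomposition} and Theorem \ref{vstheorem} that distinct regular components carry distinct additions) matches the paper's, but for the core step the paper argues constructively, by elimination: choosing $\alpha,\beta$ with $\alpha+_{v_{i_1}}\beta\neq\alpha+_{v_{i_2}}\beta$, the element $w=(\alpha v+\beta v)-(\alpha+_{v_{i_2}}\beta)v$ lies in $L(v)$, has zero $i_2$-component and nonzero $i_1$-component, and iterating this one-line annihilation step isolates a nonzero multiple $\theta_jv_{i_j}\in L(v)$ of each component, whence $Av_{i_j}=A\theta_jv_{i_j}\subseteq L(v)$. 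You instead recast the inclusion as ``the diagonal generates $G=\prod_{j\in N}(A,+_{v_j})$'' and prove that by induction, obtaining the single-coordinate element non-constructively: if none existed, the projection would be an $A$-equivariant isomorphism onto $\prod_{j\neq n}(A,+_{v_j})$, equivariance would force each coordinate map to be a right multiplication $a\mapsto ac_j$, and additivity together with right distributivity of $+_{v_n}$ (exactly the division-ring hypothesis) and $+_{v_j}\neq+_{v_n}$ would force every $c_j=0$, contradicting the diagonal condition. Both arguments are sound and use the hypotheses in the same places; the paper's is shorter and exhibits the generators explicitly, while yours isolates a clean, reusable group-theoretic fact (pairwise distinct additions make the diagonal generate the product) at the cost of an induction and a rigidity lemma. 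Two remarks. First, when you identify $L(v)$ with $\{\sum_{j\in N}\gamma_jv_j\mid(\gamma_j)\in\langle\Delta\rangle\}$, note that $L(v)$ is defined using sums only, so you need the diagonal to be inverse-closed; this holds because $(-1_A)\alpha$ is the $+_{v_j}$-inverse of $\alpha$ simultaneously for every $j$, but it deserves a line. Second, your final paragraph is a genuine improvement on the paper: the published proof stops after the span equality and never verifies $\dim(v)=|N|$, whereas your two-sided count, using the fact from Corollary \ref{decomposition} that every nonzero element of $Q(V)$ lies in exactly one ${\cal V}_j$, settles it.
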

\begin{proof}
Let $v$ be as in the statement, i.e. 
$$ v= \sum_{i\in N} v_i.$$ 
For convenience we order the elements in N as $\{i_1, \ldots , i_n\}$ where $n = |N|$. 

Since the $v_i$ are in different regular components and we are assuming the division ring condition, using Theorem \ref{vstheorem}, we know that $+_{v_i} \neq +_{v_j}$ for any $i\neq j \in N$. 

Therefore there exist $\alpha , \beta\in A$ such that 
$$ \alpha +_{v_{i_1}} \beta \neq \alpha +_{v_{i_2}} \beta.$$ 

Then, taking 
$$w= (\alpha v + \beta v) -(\alpha +_{v_{i_2}} \beta) v \in L(v) \subseteq span(v),$$ 

we have that $w= \sum_{i\in N_1} w_i$ 
where $N_1\subseteq N\backslash \{ i_2 \},$ $i_1 \in N_1$ and $w_i \in {\cal V}_i$ nonzero. 

Repeating this process successively, we can construct an element $\omega_1= \theta_1 v_{i_1} \in L(v) = span(v) $ and more generally restarting from $v$ we can construct $\omega_j = \theta_j v_{i_j} \in L(v) = span(v)$. 

Since $ A\theta_j v_{i_j} = A v_{i_j},$ this proves that
$$ \oplus_{i\in N} A v_i \subseteq span(v).$$ 
But since $ \oplus_{i\in N} A v_i$ is a near-vector space contained in $L(v)$, we therefore have $\oplus_{i\in N} A v_i=L(v)$ and since it is generated by elements of $Q(V)$, we get the result
$$ \oplus_{i\in N} A v_i=L(v) = span(v).$$ 
\end{proof} 

We now give an example to illustrate the previous result.

\begin{example}
Consider the near-vector space $(({{\mathbb{Z}}_{11}})^{3},{\mathbb{Z}}_{11}),$ where scalar multiplication is defined for all $(x,y,z) \in{\mathbb{R}}^{3}$ and $\alpha \in \mathbb{R}$ by 
$$\alpha(x,y,z) = (\alpha^{3}x, \alpha^{5}y,\alpha^{3}z).$$
Note that $$Q(V) = \{(a,0,c)|a,c, \in {\mathbb{Z}}_{11}\}\cup\{(0,b,0)|b \in {\mathbb{Z}}_{11}\}.$$
Take for example, $v = (2,5,6) \in ({{\mathbb{Z}}_{11}})^{3},$ then $v \notin Q({\mathbb{Z}}_{11}),$ and
$$span((2,5,6)) = {\mathbb{Z}}_{11}(2,0,6) + {\mathbb{Z}}_{11}(0,5,0).$$
If we take, for example $w = (3,0,4),$ then $w \in Q({\mathbb{Z}}_{11}),$ and 
$$span((3,0,4)) = {\mathbb{Z}}_{11}(3,0,4).$$
\end{example}

An easy but useful consequence of the previous theorem is the following. 

\begin{corollary}
Let $(V,A)$ be a near-vector space. 
For every $S= \{ v_i \in Q(V) | i \in I\},$ where $I$ is an index set, 
$$ span(S) =L( S)= \oplus_{t\in T}  A v_{i_t} $$
where $\{ v_{i_t},t\in T\} $ is the biggest linearly independent subset of $S$. 
\end{corollary}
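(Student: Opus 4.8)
The plan is to reduce everything to a maximal linearly independent subset of $S$ and to show that it already generates the full span. First I would invoke Zorn's Lemma to produce a maximal linearly independent subset $B = \{v_{i_t} \mid t \in T\}$ of $S$; this is legitimate because linear independence is a finitary condition (it only constrains finite subsets), so the union of a chain of independent subsets of $S$ is again independent. Since $B$ is an independent subset of $Q(V)$, Definition \ref{def2} tells us that $W := \oplus_{t\in T} A v_{i_t}$, the subgroup of $(V,+)$ generated by $AB$, is a subspace, and the independence of $B$ guarantees that the sum is direct. Because each element of $A$ acts as an endomorphism of $(V,+)$ and each $v_{i_t}$ lies in $Q(V)$ (so that each $Av_{i_t}$ is already additively closed), the set $W$ is closed under the $A$-action and under addition, whence $W = L(B) = span(B)$.

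The heart of the argument is to show $S \subseteq W$. For $s \in S$, if $s \in B$ there is nothing to prove. Otherwise, maximality of $B$ forces $B \cup \{s\}$ to be linearly dependent, so there is a finite nontrivial relation $\alpha_0 s + \sum_{t} \alpha_t v_{i_t} = 0$ with all coefficients in $A$. The coefficient $\alpha_0$ must be nonzero: were it zero, the relation would be a nontrivial relation among finitely many elements of $B$, contradicting the independence of $B$. Using that $A^*$ is a group (so $\alpha_0^{-1}$ exists), that $1_A$ acts as the identity, and that $\alpha_0^{-1}$ is an endomorphism of $(V,+)$, I would apply $\alpha_0^{-1}$ to $\alpha_0 s = -\sum_t \alpha_t v_{i_t}$ to obtain $s = \sum_t \big({-}(\alpha_0^{-1}\alpha_t)\big) v_{i_t} \in W$, where I also use that $A$ is closed under negation (established earlier in the paper). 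Hence every element of $S$ lies in $W$.

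With $S \subseteq W$ in hand, the two inclusions follow formally. Since $W$ is a subspace containing $S$, $span(S) \subseteq W$; since $B \subseteq S$, $W = span(B) \subseteq span(S)$; so $span(S) = W$. The same containment $S \subseteq W$, together with $W$ being additively closed and $A$-invariant, gives $L(S) \subseteq W$, while $W = L(B) \subseteq L(S)$; hence $L(S) = W$ as well. Combining, $span(S) = L(S) = \oplus_{t \in T} A v_{i_t}$, as required.

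I expect the main obstacle to be the ``solving for $s$'' step: one must check that the dependence relation really involves $s$ with an invertible coefficient, and that $\alpha_0^{-1}$ can be re-expanded across the sum. The latter is exactly where the endomorphism hypothesis on $A$ (rather than any right-distributivity of the near-field addition) does the work, which is why the argument goes through for an arbitrary near-vector space and does not actually require the division ring hypothesis of Theorem \ref{spanstruc}; that theorem is only needed to identify $span(v) = L(v) = \oplus A v_i$ for elements $v$ lying outside $Q(V)$, whereas here every element of $S$ already lies in $Q(V)$.
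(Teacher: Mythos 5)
Your proposal is correct, but it follows a genuinely different route from the paper's. The paper gives no standalone argument: it presents this corollary as an easy consequence of Theorem \ref{spanstruc}, i.e.\ of the structure of $span(v)$ for a single vector in the $A^{(I)}$ model, which was itself proved under the hypothesis that $(A,+_v,\cdot)$ is a division ring and via the regular decomposition of Corollary \ref{decomposition}. You instead argue directly: extract a maximal independent $B\subseteq S$ by Zorn's lemma, note that $W=\oplus_{t\in T}Av_{i_t}$ is a subspace (Definition \ref{def2}), and prove $S\subseteq W$ by taking a dependence relation for $B\cup\{s\}$, observing that the coefficient of $s$ must be nonzero (hence invertible), and solving for $s$ using only that $A^*$ is a group of automorphisms of $(V,+)$, that $A$ is closed under negation, and that $(V,+)$ is abelian --- at no point do you need $(\alpha+_v\beta)\theta=\alpha\theta+_v\beta\theta$ or any division ring property. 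This is a real gain: the corollary as stated assumes only that $(V,A)$ is a near-vector space with $S\subseteq Q(V)$, so your argument proves it in exactly that generality, whereas deducing it from Theorem \ref{spanstruc} would implicitly import the division ring hypothesis that the statement does not carry (the paper's earlier remark that span and linear combinations agree when $S\subseteq Q(V)$ points to the same elementary argument you give). Two small points you should make explicit but which are routine: the directness of the sum uses not only the independence of $B$ but also that each $v_{i_t}\in Q(V)$, so that a difference $\alpha v_{i_t}-\beta v_{i_t}$ of elements of $Av_{i_t}$ can be rewritten as $\gamma v_{i_t}$ before independence is invoked; and the identification $W=span(B)$ needs the (easy) observation that every subspace in the sense of Definition \ref{def2} is closed under the $A$-action, which follows from the endomorphism property exactly as in your closure argument for $W$.
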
 
More generally we have, 
\begin{corollary}
Let $(V,A)$ be a non-regular near-vector space and suppose $(A, +_v , \cdot)$ is a division ring for all $v\in Q(V)\backslash\{0\}$. Let $S= \{ v_i \in V | i \in I\}$ where $I$ is an index set, possibly infinite. Then we have that
$$span(S) = L(S) = \oplus_{i \in T} A\omega_i,$$
where $\omega_i\in Q(V)$, for all $i \in T$. 
\end{corollary}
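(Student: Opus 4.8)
The plan is to reduce the general case to the quasi-kernel case settled in the preceding corollary, by replacing each vector of $S$ with its regular components, which lie in $Q(V)$. Concretely, I would first apply Theorem \ref{spanstruc} to each $v_i\in S$ separately: using the regular decomposition $V=\oplus_{j\in K}{\cal V}_j$ of Corollary \ref{decomposition}, write $v_i=\sum_{j\in N_i}v_{i,j}$ with $v_{i,j}\in{\cal V}_j$ nonzero, so that Theorem \ref{spanstruc} gives $span(v_i)=L(v_i)=\oplus_{j\in N_i}Av_{i,j}$ with every component $v_{i,j}\in Q(V)$. Collecting all components, set $S'=\{v_{i,j}\mid i\in I,\ j\in N_i\}\subseteq Q(V)$.

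Next I would show $span(S)=span(S')$. Each $v_{i,j}$ lies in $span(v_i)\subseteq span(S)$, whence $S'\subseteq span(S)$ and therefore $span(S')\subseteq span(S)$; conversely each $v_i\in\oplus_{j}Av_{i,j}\subseteq span(S')$, so $S\subseteq span(S')$ and $span(S)\subseteq span(S')$. Applying the preceding corollary to $S'\subseteq Q(V)$ then yields $span(S')=L(S')=\oplus_{t\in T}A\omega_t$, where $\{\omega_t\mid t\in T\}$ is a maximal linearly independent subset of $S'$ and each $\omega_t\in Q(V)$. This already produces the direct-sum description with generators in $Q(V)$, so it only remains to identify this subspace with $L(S)$.

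Finally I would check $L(S)=\oplus_{t\in T}A\omega_t$. Since every subspace is closed under the endomorphism action of $A$ and under addition, $span(S)$ is closed under linear combinations, giving $L(S)\subseteq span(S)=\oplus_{t\in T}A\omega_t$. For the reverse inclusion, each $\omega_t$ is a component $v_{i,j}\in L(v_i)$ by Theorem \ref{spanstruc}, and $L(v_i)\subseteq L(S)$ because $v_i\in S$; hence every $\omega_t\in L(S)$, and the endomorphism property (so that $\alpha(\sum_k\beta_k s_k)=\sum_k(\alpha\beta_k)s_k$) makes $L(S)$ closed under scalar multiplication and finite sums, so $\oplus_{t\in T}A\omega_t\subseteq L(S)$. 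Combining everything gives $span(S)=L(S)=\oplus_{t\in T}A\omega_t$ with each $\omega_t\in Q(V)$, as claimed.

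I expect the main obstacle to be the bookkeeping of the reduction $span(S)=span(S')$ together with the careful verification that $L(S)$ is genuinely closed under the linear combinations invoked, rather than any deep new idea; the component-wise application of Theorem \ref{spanstruc} and the appeal to the preceding corollary are the real engine. Non-regularity plays no essential role beyond guaranteeing that the multi-component case actually arises, since in the regular division-ring setting $Q(V)=V$ by Theorem \ref{vstheorem} and the statement collapses to the preceding corollary.
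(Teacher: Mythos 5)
Your proposal is correct and follows essentially the same route as the paper: decompose each element of $S$ into its regular components via Corollary \ref{decomposition} and Theorem \ref{spanstruc}, pass to the set of components inside $Q(V)$, and invoke the preceding corollary for quasi-kernel subsets. The paper's own proof is sketchier (it explicitly ``leaves the details to the reader''), and your write-up supplies exactly those missing details --- the equality $span(S)=span(S')$ and the closure arguments identifying $L(S)$ with the span.
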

\begin{proof} 
For any $j\in I$, 
$$ v_j =\sum_{i \in N_j}  {v_j}_i $$
according to the regular decomposition where ${v_j}_i \in {\cal V}_{i}$ nonzero. 

It is not hard to see that 
$$span(v_j)=\oplus_{i \in N_j}  A{v_j}_i\subseteq span(S),$$
where
$$N_j = \{ i \in I | {v_j}_i  \neq 0 \}.$$ 
Therefore, 
$$ \sum_{i\in I } span(v_j) \subseteq span(S).$$ 
From the previous corollary we know that 
$$ span ( \{ {v_j}_i| j \in I\}) = \oplus_{t\in T_{{i, j}} }A {w_{i,j}} _t,$$
where $\{{w_{i,j}} _t | t\in T_{i, j} \}  $ is the biggest linearly independent subset of $\{ {v_j}_i| j \in I\}$. 

We leave the details to the reader to conclude that 
$$span(S) = L(S) = \oplus_{j \in I, i \in N_j, t \in T_{i, j} } A{w_{i,j}} _t,$$
proving the result. 
\end{proof}

From the previous corollary, we can rectify an error made in Lemma 2.4 in \cite{Howell2}, p.2426, where it was shown that a subset of a near-vector space is a subspace if and only if it is closed under addition and scalar multiplication. Of course, the one direction is obvious. The problem with the converse was that the proposed generating set was not necessarily contained in the subspace.
We rectify it here for the case where we assume $+_v = +_w$ for any $v, w \in Q(V)\backslash\{0\}.$ 

\begin{corollary}
Let $(V,A)$ be a non-regular near-vector space and suppose $(A, +_v , \cdot)$ is a division ring for all nonzero $v\in Q(V)$. $W$ is a subspace of $V$ if and only it is non-empty, closed under addition and scalar multiplication. 
\end{corollary}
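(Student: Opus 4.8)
The plan is to prove both directions of the biconditional, with the forward direction being essentially immediate and the reverse direction being the substantive content that leverages the preceding corollary.

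For the forward direction, suppose $W$ is a subspace of $V$. By Definition \ref{def2}, $W$ is nonempty (since $\emptyset \neq W$) and is the subgroup of $(V,+)$ generated additively by $AX$ for some independent subset $X$ of $Q(V)$. Being a subgroup, $W$ is closed under addition. Closure under scalar multiplication follows because for $x \in X$ and $a, b \in A$ we have $a(bx) = (ab)x \in AX \subseteq W$, and this extends to arbitrary elements of $W$ using that $A$ acts by endomorphisms of $(V,+)$, so each $\alpha \in A$ maps the subgroup generated by $AX$ into itself.

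For the reverse direction, suppose $W$ is nonempty and closed under both addition and scalar multiplication. The strategy is to realise $W$ as $\mathrm{span}(W)$ and then invoke the preceding corollary (the one characterising $\mathrm{span}(S) = L(S) = \oplus_{i\in T} A\omega_i$ with $\omega_i \in Q(V)$) to exhibit $W$ as generated by an independent subset of $Q(V)$, matching Definition \ref{def2}. Concretely, I would set $S = W$ and apply that corollary to obtain $\mathrm{span}(W) = L(W) = \oplus_{i\in T} A\omega_i$ with each $\omega_i \in Q(V)$. The key observation is that closure under addition and scalar multiplication forces $L(W) \subseteq W$: every linear combination $\alpha_1 w_1 + \cdots + \alpha_t w_t$ of elements $w_j \in W$ lies in $W$ by repeatedly applying the two closure hypotheses. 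Combined with the trivial inclusion $W \subseteq \mathrm{span}(W) = L(W)$ (each $w \in W$ is a one-term linear combination of itself), we get $W = L(W) = \oplus_{i\in T} A\omega_i$. Since the $\omega_i$ lie in $Q(V)$ and the sum is direct, the set $\{\omega_i \mid i \in T\}$ is an independent subset of $Q(V)$ generating $W$ additively via $AX$, so $W$ is a subspace by Definition \ref{def2}.

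The main obstacle is justifying that $\{\omega_i \mid i \in T\}$ is genuinely linearly independent in the sense of Definition of linear independence (for subsets of $Q(V)$) and that the decomposition $\oplus_{i\in T} A\omega_i$ really matches the generating condition $W = \langle AX \rangle_+$ of Definition \ref{def2}; this requires carefully reading off independence from the directness of the sum, which is where the division ring hypothesis on $(A,+_v,\cdot)$ and the non-regularity assumption feed in through the cited corollary. The remaining verifications — that $L(W) \subseteq W$ and that the one-term inclusion gives $W \subseteq L(W)$ — are routine inductions on the number of summands using the closure hypotheses, so I would state them briefly and refer the reader to the structure already established in the corollary rather than recomputing.
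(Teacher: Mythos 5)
Your proof is correct and follows exactly the route the paper intends: the paper states this corollary without a separate proof, presenting it as an immediate consequence of the preceding corollary $\mathrm{span}(S)=L(S)=\oplus_{i\in T}A\omega_i$ with $\omega_i\in Q(V)$, and your argument (closure gives $W=L(W)=\mathrm{span}(W)=\oplus_{i\in T}A\omega_i$, so the generating set $\{\omega_i\}\subseteq Q(V)$ lies inside $W$, which is precisely the point that fixed the error in the earlier literature) is that intended proof written out. Nothing is missing.
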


\begin{corollary} 
Let $(V,A)$ be a non-regular near-vector space and suppose $(A, +_v , \cdot)$ is a division ring for all $v\in Q(V)\backslash\{0\}$. 

There exist $v$ and $w \in V\backslash Q(V)$ $v \neq w$ such that $span (v) = span (w)$. 
\end{corollary}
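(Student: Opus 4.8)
The plan is to combine the regular decomposition from Corollary~\ref{decomposition} with the explicit description of span in Theorem~\ref{spanstruc}. First I would record that, since $V$ is non-regular, the decomposition $V = \oplus_{j \in K} {\cal V}_j$ furnished by Corollary~\ref{decomposition} must have at least two summands: if $K$ had a single element then $V = {\cal V}_{j_0}$ would itself be one regular summand, contradicting non-regularity. I then fix two distinct indices $j_1, j_2 \in K$ together with nonzero vectors $v_1 \in {\cal V}_{j_1}$ and $v_2 \in {\cal V}_{j_2}$ (these lie in $Q(V)\backslash\{0\}$, since ${\cal V}_j = Q({\cal V}_j) \subseteq Q(V)$ as established in the proof of Corollary~\ref{decomposition}).

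The guiding idea is that the span of a vector depends only on the coordinate axes $A v_i$ attached to its nonzero regular components, and that scaling a component by a unit of $A$ leaves its axis unchanged. Concretely, I would set $v = v_1 + v_2$ and $w = v_1 + \alpha v_2$ for a scalar $\alpha \in A^{*}\backslash\{1_A\}$. Each of $v$ and $w$ has exactly two nonzero components sitting in distinct regular summands, so Theorem~\ref{spanstruc} gives $\dim(v) = \dim(w) = 2$, whence $v, w \in V \backslash Q(V)$, and moreover $\mathrm{span}(v) = A v_1 \oplus A v_2$ while $\mathrm{span}(w) = A v_1 \oplus A(\alpha v_2)$. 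Since $\alpha \in A^{*}$ and $A^{*}$ is a group, $A(\alpha v_2) = (A\alpha) v_2 = A v_2$, so that $\mathrm{span}(v) = \mathrm{span}(w)$. Distinctness $v \neq w$ is then immediate from the fixed point free property (Definition~\ref{defnvs}, 4.): the ${\cal V}_{j_2}$-components $v_2$ and $\alpha v_2$ of $v$ and $w$ differ because $\alpha \neq 1_A$ and $v_2 \neq 0$.

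The one genuine obstacle is securing a scalar $\alpha \neq 1_A$, i.e. showing that $A^{*}$ is nontrivial. I would rule out $A^{*} = \{1_A\}$ directly: in that case $A = \{0_A, 1_A\}$ is a near-field of order two, hence $\mathbb{F}_2$, whose additive structure is the unique one, so $+_v = +_w$ for all $v, w \in Q(V)\backslash\{0\}$. By Theorem~\ref{vstheorem} (condition 4) this would force $V$ to be regular, contradicting the hypothesis. Therefore $|A^{*}| \geq 2$, the required $\alpha$ exists, and the construction above produces the desired pair $v \neq w$ in $V \backslash Q(V)$ with $\mathrm{span}(v) = \mathrm{span}(w)$.
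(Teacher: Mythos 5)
Your proposal is correct and follows essentially the same route as the paper: the paper's proof likewise takes $v_1, v_2 \in Q(V)$ in distinct regular components and compares $\mathrm{span}(v_1+v_2)$ with $\mathrm{span}(\theta v_1 + v_2)$ for $\theta \neq 1$ (you scale the second component instead of the first, which is immaterial). Your write-up additionally justifies details the paper leaves implicit --- that non-regularity forces at least two regular summands, that $\dim = 2$ places $v, w$ outside $Q(V)$, and that $A^{*} \neq \{1_A\}$ --- all of which are sound.
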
 

\begin{proof} 
Take $v_1, v_2 \in Q(V)$ linearly independent and not in the same regular component and $v = v_1 + v_2$ and $w= \theta v_1 + v_2 \in  V\backslash Q(V)$, where $\theta \neq 1$, $v\neq w$, but  $span (v) = span (w)$. 
\end{proof}

The following two corollaries shed more light on why taking elements outside of $Q(V)$ as basis elements would be counter-intuitive to our general intuition with respect to a basis.

\begin{corollary} 
Let $(V,A)$ be a non-regular near-vector space with $dim (V)> 2$ and suppose $(A, +_v , \cdot)$ is a division ring for all $v\in Q(V)\backslash\{0\}$. 
Then there exisst $v$ and $w \in V\backslash Q(V)$ such that $v \notin span (w)$ and $w\notin span (v)$ and $span (v) \cap span (w) \neq \{ 0\}$. 
\end{corollary} 
\begin{proof} 
Take $v_1, v_2,v_3 \in Q(V)$ linearly independent not in the same regular subspace in the decomposition of $V$ and $v = v_1 + v_2$ and $w= v_2 + v_3 \in  V\backslash Q(V)$, then we have  $v \notin span (w)$ and $w\notin span (v),$ but 
$$span (v) \cap span (w)= Av_2\neq \{ 0 \}.$$ 
\end{proof}

\section*{Acknowledgment}
The authors would like to express their gratitude for funding by the National Research Foundation (Grant number: 93050) and Stellenbosch University. The authors would like to thank Jacques Rabie for Example \ref{jacques} and Charlotte Kestner for Example \ref{lotte}.


\begin{thebibliography}{99}

\bibitem{Andre}  Andr\'e J. Lineare Algebra \"uber Fastk\"orpern. Math Z 1974: 136; 295-313.
\bibitem{Beidleman}  Beidleman, J.C. On near-rings and near-ring modules, PhD, Pennsylvanian State University, USA, 1966.
\bibitem{ChisHowSan} Chistyakov D, Howell K-T, Sanon SP. On representation theory and near-vector spaces. Linear Multilinear Algebra 2019: 67:7; 495–1510. 
\bibitem{DorHowSan} Dorfling S, Howell K-T, Sanon SP. The decomposition of finite dimensional Near-vector spaces. Communications in Algebra 2018: 46:7; 3033-3046.
\bibitem{Howell1} Howell K-T. Contributions to the Theory of Near-Vector Spaces, PhD, University of the Free State, Bloemfontein, South Africa, 2008.
\bibitem{Karzel} {\sc H. Karzel}, Fastvektorr\"{a}ume, unvollst\"{a}ndige Fastk\"{o}rper und ihre abgeleiteten Strukturen. Erscheint in Mitt. Sem. Univ. Giessen, 1984.
\bibitem{ktfibrations}{\sc K-T Howell}, Near-vector spaces determined by finite fields and their fibrations, Turkish Journal of Mathematics 2019: 43; 2549–2560.
\bibitem{Howell2} Howell K-T. On subspaces and mappings of Near-vector spaces. Communications in Algebra 2015: 43:6; 2524-2540.
\bibitem{HowMey} Howell K-T, Meyer JH. Near-vector spaces determined by finite fields. Journal of Algebra 2010: 398; 55-62.
\bibitem{HowSanSpan} Howell K-T, Sanon S. On spanning sets and generators of Near-vector spaces. Turkish Journal of  Mathematics 2018: 42; 3232–3241.

\bibitem{Mel} Meldrum JDP. Near-rings and Their Links with Groups. New York, NY, USA: Advanced Publishing Program, 1985.
\bibitem{Pilz}Pilz G. Near-rings: The Theory and its Applications, Revised Edition. North Holland, New York, 1983.
\bibitem{vdWalt} van der Walt, APJ. Matrix near-rings contained in $2$-primitive near-rings with minimal subgroups. Journal of Algebra 1992: 148; 296-304.
\end{thebibliography}
\end{document}